\newtheorem{theorem}{Theorem}[section]
\newtheorem{lemma}[theorem]{Lemma}
\newtheorem{corollary}[theorem]{Corollary}
\newtheorem{proposition}[theorem]{Proposition}
\newcommand{\minusre}{\hspace{0.3em}\raisebox{0.3ex}{\sl \tiny /}\hspace{0.3em}}
\newcommand{\minusli}{\hspace{0.3em}\raisebox{0.3ex}{\sl \tiny $\setminus $}\hspace{0.3em}}
\newcommand{\lex}{\,\overrightarrow{\times}\,}
\newcommand{\dx}{\, \mbox{\rm d}}
\newcommand{\Ker}{\mbox{\rm Ker}}
\newcommand{\Aff}{\mbox{\rm Aff}}
\begin{document}
\title{States on Pseudo Effect Algebras and Integrals}
\author{Anatolij Dvure\v censkij}
\date{}
\maketitle

\begin{center}
\footnote{Keywords: Pseudo effect algebra; effect algebra; Riesz
Decomposition Properties; state; unital po-group; simplex; Choquet
simplex; Bauer simplex

AMS classification:  81P15, 03G12, 03B50

The  author thanks  for the support by Center of Excellence SAS
-~Quantum Technologies~-,  ERDF OP R\&D Projects CE QUTE ITMS
26240120009 and meta-QUTE ITMS 26240120022, the grant VEGA No.
2/0032/09 SAV. }
\small{Mathematical Institute,  Slovak Academy of Sciences\\
\v Stef\'anikova 49, SK-814 73 Bratislava, Slovakia\\
E-mail: {\tt dvurecen@mat.savba.sk}, }
\end{center}

\begin{abstract}  We show that every state on an interval pseudo effect
algebra $E$ satisfying some kind of the Riesz Decomposition
Properties (RDP) is an integral through a regular Borel probability
measure defined on the Borel $\sigma$-algebra of a Choquet simplex
$K$. In particular, if $E$ satisfies the strongest type of (RDP),
the representing Borel probability measure can be uniquely chosen to
have its support in the set of the extreme points of $K.$
\end{abstract}

\section{Introduction}

At the beginning of the Nineties, Foulis and Bennett \cite{FoBe}
introduced effect algebras with a partially defined addition, $+,$
in order to axiomatize some quantum measurements. They reflect
common features of the quantum logic $\mathcal P(H)$ of all
orthogonal projectors of a Hilbert space $H,$ that is a complete
orthomodular lattice, and  of the set of all Hermitian operators
between the operators $O$ and $I,$ $\mathcal E(H),$ that models
so-called POV-measures.

Effect algebras generalize many examples of quantum structures, like
Boolean algebras, orthomodular lattices or posets, orthoalgebras,
MV-algebras, etc. We recall that MV-algebras are algebraic
counterparts of the many-valued reasoning, and they appeared in
Mathematics under many different names, situations and motivations.
Even in the theory of effect algebras, they were defined in an
equivalent way as Phi-symmetric effect algebras, \cite{BeFo}. The
monograph \cite{DvPu} can serve as a basic source of information
about effect algebras.

Many important examples of effect algebras can be obtained as an
interval in the positive cone of a partially ordered group (=
po-group). For example, if $\mathcal B(H)$ denotes the system of all
Hermitian operators, then it is a po-group with respect to the
natural ordering of operators, and $\mathcal E(H) = [O,I] \subset
\mathcal B(H).$

During the last decade,  a whole family of interesting structures
has been appeared, like pseudo MV-algebras, \cite{GeIo}, where some
kind of commutativity was dropped. The author and Vetterlein
introduced a non-commutative version of effect algebras, called {\it
pseudo effect algebras}, see  \cite{DvVe1, DvVe2}. If a pseudo
effect algebra satisfies a stronger version of the Riesz
Decomposition Property, (RDP)$_1$, then it is also an interval in
some not necessarily commutative po-group satisfying also (RDP)$_1,$
\cite{DvVe2}. In addition, there is even a categorical equivalence
among such structures, the category  of pseudo effect algebras with
(RDP)$_1$ and the category of unital po-groups with (RDP)$_1$  that
are not necessarily Abelian.

A {\it state} is an analogue of a probability measure for quantum
structures. It is defined as a normalized additive functional on a
pseudo effect algebra preserving the partial addition $+.$ In many
cases it is connected with  a state on a unital po-group.

The state space of a pseudo effect algebra is always a compact
convex set, unfortunately, sometimes it is empty. Recently, Panti
\cite{Pan} and Kroupa \cite{Kro} proved that every state on an
MV-algebra (= Phi-symmetric effect algebra) can be represented as an
integral through a regular Borel probability measure. This result
was generalized in \cite{Dvu2} also for interval effect algebras. In
this study we continue with states on pseudo effect algebras, and
this is the main aim of the present paper. For this goal, we show
that if a pseudo effect algebra satisfies either (RDP)$_1$ or it is
an interval  in a unital po-group with (RDP), then its state space
is either empty or it is a nonempty Choquet simplex. If $E$ is even
a pseudo effect algebra with (RDP)$_2$, then its state space is
either empty or a nonempty Bauer simplex. To show that it is
necessary to study relatively bounded homomorphisms on non-Abelian
po-groups. These notions are studied in the monograph \cite[pp.
37--42]{Goo} only for Abelian po-groups. Because we are working with
po-groups that are not necessarily commutative groups, it is
necessary to exhibit these homomorphisms for our case in full
details.

Finally, this will  allow us to represent any state as a standard
integral through a regular Borel probability measure over the Borel
$\sigma$-algebra generated by the state space.

The paper is organized as follows. Section 2 is a review on pseudo
effect algebras and their basic properties. Relatively bounded
homomorphisms for not necessarily Abelian po-groups are studied in
Section 3. The state spaces of pseudo effect algebras and the
situations when they are simplices are studied in Section 4. The
main body of the paper, the integral representation of states on
pseudo effect algebras, is exhibited in Section 5.  Some final
remarks are presented in the last section.

\section{Pseudo Effect Algebras}

According to \cite{DvVe1,Dvu2}, a {\it pseudo effect algebra} is  a
partial algebra  $(E; +, 0, 1)$, where $+$ is a partial binary
operation and $0$ and $1$ are constants, such that for all $a, b, c
\in E$, the following holds

\begin{enumerate}
\item[(i)] $a+b$ and $(a+b)+c$ exist if and only if $b+c$ and
$a+(b+c)$ exist, and in this case $(a+b)+c = a+(b+c)$;

\item[(ii)]
  there is exactly one $d \in E$ and
exactly one $e \in E$ such that $a+d = e+a = 1$;

\item[(iii)]
 if $a+b$ exists, there are elements $d, e
\in E$ such that $a+b = d+a = b+e$;

\item[(iv)] if $1+a$ or $a+1$ exists, then $a = 0$.
\end{enumerate}

If we define $a \le b$ if and only if there exists an element $c\in
E$ such that $a+c =b,$ then $\le$ is a partial ordering on $E$ such
that $0 \le a \le 1$ for any $a \in E.$ It is possible to show that
$a \le b$ if and only if $b = a+c = d+a$ for some $c,d \in E$. We
write $c = a \minusre b$ and $d = b \minusli a.$ Then

$$ (b \minusli a) + a = a + (a \minusre b) = b,
$$
and we write $a^- = 1 \minusli a$ and $a^\sim = a\minusre 1$ for any
$a \in E.$

For basic properties of pseudo effect algebras see \cite{DvVe1} and
\cite{DvVe2}. We recall that if $+$ is commutative, $E$ is said to
be an {\it effect algebra}; for a comprehensive overview on effect
algebras see e.g. \cite{DvPu}.

We recall that a {\it po-group} (= partially ordered group) is a
group $G$ with a partial order, $\le,$ such that if $a\le b,$ $a,b
\in G,$ then $x+a+y \le x+b+y$ for all $x,y \in G.$  We denote by
$G^+$ the set of all positive elements of $G.$ If, in addition,
$\le$ implies that $G$ is a lattice, we call it an $\ell$-group (=
lattice ordered group). An element $u\in G^+$ is said to a {\it
strong element} if given $g \in G,$ there is an integer $n\ge 1$
such that $g \le nu,$ and the couple $(G,u)$ with a fixed strong is
called a {\it unital po-group.} The books like \cite{Fuc, Gla} can
serve as guides through the world of partially ordered groups.

For example, if $(G,u)$ is a unital (not necessary Abelian) po-group
with strong unit $u$, and
$$
\Gamma(G,u) := \{g \in G: \ 0 \le g \le u\},\eqno(2.1)
$$
then $(\Gamma(G,u); +,0,u)$ is a pseudo effect algebra if we
restrict the group addition $+$ to $\Gamma(G,u).$  Every pseudo
effect algebra $E$ that is isomorphic to some $\Gamma(G,u)$ is said
to be an {\it interval pseudo effect algebra}.

We recall that if $a^-=a^\sim$ for all $a\in E,$ ($E$ is said to be
{\it symmetric}), then $E$ is not necessarily commutative. E.g., let
$\mathbb Z$ be the group of integers and $G$ be a non-Abelian
po-group, and let $\mathbb Z \lex G$ be the lexicographical product;
$u=(0,1)$ is its strong unit. Then $E =\Gamma((\mathbb Z\lex G,
(1,0))$ is a symmetric pseudo effect algebra that is not
commutative.

According to \cite{DvVe1}, we introduce for pseudo effect algebras
the following forms of the Riesz Decomposition Properties which in
the case of commutative effect algebras can coincide:

\begin{enumerate}
 \item[(a)] For $a, b \in E$, we write $a \ \mbox{\bf com}\ b$ to mean that
for all $a_1 \leq a$ and $b_1 \leq b$, $a_1$ and $b_1$ commute.

\item[(b)] We say that $E$ fulfils the {\it Riesz
Interpolation Property}, (RIP) for short, if for any $a_1, a_2, b_1,
b_2 \in E$ such that $a_1, a_2 \,\leq\, b_1, b_2$ there is a $c \in
E$ such that $a_1, a_2 \,\leq\, c \,\leq\, b_1, b_2$.

 \item[(c)] We say that $E$ fulfils the {\it weak Riesz
Decomposition Property}, (RDP$_0$) for short, if for any $a, b_1,
b_2 \in E$ such that $a \leq b_1+b_2$ there are $d_1, d_2 \in E$
such that $d_1 \leq b_1$, $\;d_2 \leq b_2$ and $a = d_1+d_2$.

 \item[(d)] We say that $E$ fulfils the {\it Riesz
Decomposition Property}, (RDP) for short, if for any $a_1, a_2, b_1,
b_2 \in E$ such that $a_1+a_2 = b_1+b_2$ there are $d_1, d_2, d_3,
d_4 \in E$ such that $d_1+d_2 = a_1$, $\,d_3+d_4 = a_2$, $\,d_1+d_3
= b_1$, $\,d_2+d_4 = b_2$.

 \item[(e)] We say that $E$ fulfils the {\it
commutational Riesz Decomposition Property}, (RDP$_1$) for short, if
for any $a_1, a_2, b_1, b_2 \in E$ such that $a_1+a_2 = b_1+b_2$
there are $d_1, d_2, d_3, d_4 \in E$ such that (i) $d_1+d_2 = a_1$,
$\,d_3+d_4 = a_2$, $\,d_1+d_3 = b_1$, $\,d_2+d_4 = b_2$, and (ii)
$d_2 \ \mbox{\bf com}\ d_3$.

 \item[(f)] We say that $E$ fulfils the {\it strong
Riesz Decomposition Property}, (RDP$_2$) for short, if for any $a_1,
a_2, b_1, b_2 \in E$ such that $a_1+a_2 = b_1+b_2$ there are $d_1,
d_2, d_3, d_4 \in E$ such that (i) $d_1+d_2 = a_1$, $\,d_3+d_4 =
a_2$, $\,d_1+d_3 = b_1$, $\,d_2+d_4 = b_2$, and (ii) $d_2 \wedge d_3
= 0$.
\end{enumerate}

We have the implications

\centerline{ (RDP$_2$) $\Rightarrow$ (RDP$_1$) $\Rightarrow$ (RDP)
$\Rightarrow$\ (RDP$_0$) $\Rightarrow$ (RIP). }

The converse of any of these implications does not hold, see
\cite{DvVe1}. For commutative effect algebras we have

\centerline{ (RDP$_2$) $\Rightarrow$ (RDP$_1$) $\Leftrightarrow$
(RDP) $\Leftrightarrow$\ (RDP$_0$) $\Rightarrow$ (RIP). }

In addition, every pseudo effect algebra with (RDP)$_2$ is
lattice-ordered, \cite[Prop 3.3]{DvVe1}.

We note that if $E$ is an effect algebra with (RDP), then $E$ is an
interval po-group, see \cite{Rav} (\cite[Thm 1.7.17]{DvPu}), and
also if $E$ is a pseudo effect algebra with (RDP)$_1,$ then $E$ is
an interval pseudo effect algebra, see  \cite[Thm 5.7]{DvVe2}.

According to \cite{GeIo}, a {\it pseudo MV-algebra} is an algebra
$(M; \oplus,^-,^\sim,0,1)$ of type $(2,1,1,$ $0,0)$ such that the
following axioms hold for all $x,y,z \in M$,  where the derived
operation $\odot$ appearing in the axioms (A6) and (A7) is defined
by
$$ y \odot x =(x^- \oplus y^-)^\sim.$$
\begin{enumerate}
\item[{\rm (A1)}]  $x \oplus (y \oplus z) = (x \oplus y) \oplus z;$

\item[{\rm (A2)}] $x\oplus 0 = 0 \oplus x = x;$

\item[{\rm (A3)}] $x \oplus 1 = 1 \oplus x = 1;$

\item[{\rm (A4)}] $1^\sim = 0;$ $1^- = 0;$

\item[{\rm (A5)}] $(x^- \oplus y^-)^\sim = (x^\sim \oplus y^\sim)^-;$

\item[{\rm (A6)}] $x \oplus (x^\sim \odot y) = y \oplus (y^\sim
\odot x) = (x \odot y^-) \oplus y = (y \odot x^-) \oplus x;$

\item[{\rm (A7)}] $x \odot (x^- \oplus y) = (x \oplus y^\sim)
\odot y;$

\item[{\rm (A8)}] $(x^-)^\sim= x.$
\end{enumerate}

For a unital $\ell$-group $(G,u),$ set

$$\Gamma(G,u) := [0,u]$$
and
\begin{eqnarray*}
x \oplus y &:=&
(x+y) \wedge u,\\
x^- &:=& u - x,\\
x^\sim &:=& -x +u,\\
x\odot y&:= &(x-u+y)\vee 0,
\end{eqnarray*}
then $(\Gamma(G,u);\oplus, ^-,^\sim,0,u)$ is a pseudo MV-algebra,
and according to \cite{Dvu1}, for any pseudo MV-algebra there is a
unique unital $\ell$-group $(G,u)$ such that $M\cong \Gamma(G,u).$

Define $+$ to be a partial operation on $M$ that is defined for
elements $a, b \in M$ iff $a \leq b^-$, and in that case let $a+b :=
a \oplus b$. Then $(M;+,0,1)$ is a pseudo effect algebra satisfying
(RDP)$_2,$ and conversely, every pseudo effect algebra satisfying
(RDP)$_2$ can be transformed into a pseudo MV-algebra, see \cite[Thm
8.8]{DvVe2}.

We say that a mapping $s$ from a pseudo effect algebra $E$ into the
real interval $[0,1]$ is a {\it state} if $s(a+b) = s(a)+s(b)$
whenever $a+b$ is defined in $E$ and $s(1)=1.$  Let $\mathcal S(E)$
be the set of all states. Then it is a convex set, i.e. if $s_1,s_2
\in \mathcal S(E)$ and $\lambda \in [0,1],$ then $s = \lambda s_1
+(1-\lambda) s_2 \in \mathcal S(E).$ It can happen that $\mathcal
S(E)$ is empty. A state $s$ is {\it extremal} if from the property
$s = \lambda s_1 +(1-\lambda) s_2$ for some $s_1,s_2 \in \mathcal
S(E)$ and $\lambda \in (0,1),$ we conclude $s= s_1 =s_2.$ Let
$\partial_e \mathcal S(E)$ denote the set of all extremal states on
$E.$

In a similar way we define also a state on any pseudo MV-algebra.

We say that a net of states, $\{s_\alpha\},$ on $E$ {\it converges
weakly} to a state, $s,$ on $E$ if $\lim_\alpha s_\alpha (a) = s(a)$
for any $a \in E.$ Then $\mathcal S(E)$ is a compact convex
Hausdorff space, and due to the Krein--Mil'man Theorem, see
\cite[Thm 5.17]{Goo}, every state on $E$ is a weak limit of a net of
convex combinations of extremal states.

\section{Relatively Bounded Homomorphisms}

A poset $X$ is said to be {\it directed} if given $x,y \in X,$ there
is an element $z \in X$ such that $x\le z$ and $y\le z.$ It is easy
to show that a po-group $G$ is directed iff every element $g\in G$
is a difference of two elements from $G^+,$ i.e. $g = g_1-g_2 =
-g_1'+g_2',$ where $g_1,g_2, g_1', g_2' \in G^+.$

Let $(G;+,0,\le)$ be a  po-group. A subgroup $H$ of $G$ is said to
be {\it convex} if from $x\le y \le z,$ where $x,z\in H$ and $y \in
G,$ we have $y \in H.$ An {\it o-ideal} is any directed convex
subgroup of $G.$

According to \cite{DvVe1, DvVe2}, we introduce different types of
the Riesz Decomposition Properties for po-groups.

Let $(G;+,0,\le)$ be a directed po-group.

\begin{enumerate}
\item[(a)] For $a, b \in G^+$, we write $a \ \mbox{\bf com}\ b$ to mean that
for all $a_1 \leq a$ and $b_1 \leq b$,  $a_1$ and $b_1$ commute,
where $a_1,b_1 \in G^+.$

\item[(b)] We say that $G$ fulfils the {\it Riesz Interpolation
Property}, (RIP) for short, if for any $a_1, a_2, b_1, b_2 \geq 0$
such that $a_1, a_2 \,\leq\, b_1, b_2,$ there is a $c \in G$ such
that $a_1, a_2 \,\leq\, c \,\leq\, b_1, b_2$.

\item[(c)] We say that $G$ fulfils the {\it weak Riesz
Decomposition Property}, (RDP$_0$) for short, if for any $a, b_1,
b_2 \geq 0$ such that $a \leq b_1+b_2,$ there are $d_1, d_2 \in G$
such that $0 \leq d_1 \leq b_1$, $\,0 \leq d_2 \leq b_2$ and $a =
d_1+d_2$.

 \item[(d)] We say that $G$ fulfils the {\it Riesz
Decomposition Property}, (RDP) for short, if for any $a_1, a_2, b_1,
b_2 \geq 0$ such that $a_1+a_2 = b_1+b_2,$ there are $d_1, d_2, d_3,
d_4 \geq 0$ such that $d_1+d_2 = a_1$, $\,d_3+d_4 = a_2$, $\,d_1+d_3
= b_1$, $\,d_2+d_4 = b_2$.

 \item[(e)] We say that $G$ fulfils the {\it
commutational Riesz Decomposition Property}, (RDP$_1$) for short, if
for any $a_1, a_2, b_1, b_2 \geq 0$ such that $a_1+a_2 = b_1+b_2,$
there are $d_1, d_2, d_3, d_4 \geq 0$ such that (i) $d_1+d_2 = a_1$,
$\,d_3+d_4 = a_2$, $\,d_1+d_3 = b_1$, $\,d_2+d_4 = b_2$, and (ii)
$d_2 \,\mbox{{\bf com}}\, d_3$.

 \item[(f)] We say that $G$ fulfils the {\it strong
Riesz Decomposition Property}, (RDP$_2$) for short, if for any $a_1,
a_2, b_1, b_2 \geq 0$ such that $a_1+a_2 = b_1+b_2,$ there are $d_1,
d_2, d_3, d_4 \geq 0$ such that (i) $d_1+d_2 = a_1$, $\,d_3+d_4 =
a_2$, $\,d_1+d_3 = b_1$, $\,d_2+d_4 = b_2$, and (ii) $d_2 \wedge d_3
= 0$.
\end{enumerate}
Then

\centerline{ (RDP$_2$) $\Rightarrow$ (RDP$_1$) $\Rightarrow$ (RDP)
$\Rightarrow$ (RDP$_0$) $\Leftrightarrow$ (RIP), } \noindent and if
$G$ is Abelian, then {\rm (RDP$_0$)} $\Leftrightarrow$ (RDP$_1$); if
$G$ is not Abelian, the converse implications do not hold, in
general, \cite{DvVe1, DvVe2}. In addition, (RDP$_2$) holds in $G$
iff $G$ is an $\ell$-group.

Let $G$ and $H$ be po-groups. A mapping $d:G^+\to H$ is said to be
{\it subadditive} provided $d(0) = 0$ and $d(x+y)\le d(x)+d(y)$ for
all $x,y \in G^+.$

The following result is proved in \cite[Lem 2.24]{Goo} for Abelian
po-groups. An analogous proof can be used also for  po-groups with
(RIP) that are not necessarily Abelian.  Also the rest of this
section follows basic ideas of \cite[pp. 38--42]{Goo}. Because we
are interesting in general po-groups that are not studied in
\cite{Goo}, and (RIP) is not equivalent with (RDP) for non-Abelian
po-groups, we will present all our proofs in full details  for our
not necessarily commutative po-groups with (RDP).

\begin{proposition}\label{pr:3.1}  Let $G$ be a directed po-group with {\rm
(RDP)}, and let $H$ be a Dedekind complete $\ell$-group, and let
$d:G^+\to H$ be a subadditive mapping. For all $x\in G^+$, assume
that the set
$$
D(x):=\{d(x_1)+\cdots+d(x_n): x = x_1+\cdots+x_n, \ x_1,\ldots,x_n
\in G^+\} \eqno(3.1)
$$
is bounded above in $H.$  Then there is a group homomorphism $f:G\to
H$ such that $f(x)=\bigvee D(x)$ for all $x\in G^+.$

\end{proposition}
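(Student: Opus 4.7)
The plan is to define $f$ on $G^+$ by $f(x):=\bigvee D(x)$ (which exists in $H$ by Dedekind completeness and the boundedness hypothesis), verify that it is additive on the positive cone, and then extend to a group homomorphism on all of $G$ by directedness. Note that the trivial decomposition $x=x$ yields $d(x)\le f(x)$ for every $x\in G^+$.

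The heart of the argument is the additivity $f(x+y)=f(x)+f(y)$ on $G^+$. The easy inequality $f(x)+f(y)\le f(x+y)$ follows by concatenating decompositions of $x$ and of $y$ and passing to suprema, using that in an $\ell$-group the supremum distributes over group translations on either side, so that $\bigvee(A+B)=\bigvee A+\bigvee B$ whenever the suprema exist. The reverse inequality is where (RDP) is essential. Given any decomposition $x+y=z_1+\cdots+z_n$, one application of (RDP) to $x+y=z_1+(z_2+\cdots+z_n)$ splits $z_1=x_1+y_1$ and $z_2+\cdots+z_n=x'+y'$ with $x=x_1+x'$, $y=y_1+y'$; iterating produces refinements $z_i=x_i+y_i$ with $x=x_1+\cdots+x_n$ and $y=y_1+\cdots+y_n$. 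Subadditivity of $d$ then gives $\sum d(z_i)\le\sum d(x_i)+\sum d(y_i)\le f(x)+f(y)$, and passing to the supremum over decompositions of $x+y$ yields $f(x+y)\le f(x)+f(y)$.

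For the extension, the group $G$ is directed, so every $g\in G$ can be written $g=p-q$ with $p,q\in G^+$, and I set $f(g):=f(p)-f(q)$. The computation lives in $H$, which is Abelian as any Dedekind complete $\ell$-group is. For well-definedness, suppose $g=p-q=p'-q'$. By directedness pick $r\in G^+$ majorising both $q$ and $q'$, and set $s:=r-q$, $s':=r-q'\in G^+$. Then $q+s=r=q'+s'$, and substituting into $p-q=p'-q'$ gives $p+s=p'+s'$ in $G^+$. Applying additivity on $G^+$ to both equalities and subtracting in the Abelian $H$ produces $f(p)-f(q)=f(p')-f(q')$, as required. The homomorphism property is then verified analogously: for $g_i=p_i-q_i$ one writes $g_1+g_2=p-q$ via directedness and uses additivity on $G^+$ together with commutativity of $H$.

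The principal obstacle I expect lies in the extension stage rather than the (RDP)-refinement: rearranging sums in a non-commutative $G$ requires careful bookkeeping, and one has to consistently exploit the commutativity of $H$ to perform the cancellations that are automatic in the classical Abelian setting of \cite[Lem 2.24]{Goo}.
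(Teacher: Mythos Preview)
Your outline is essentially the paper's own proof: define $f=\bigvee D(\cdot)$ on $G^+$, prove additivity there by concatenation for one inequality and an (RDP)-refinement for the other, and then extend using directedness together with the commutativity of $H$. One genuine slip in the non-Abelian bookkeeping: with $s:=r-q$ you obtain $s+q=r$, not $q+s=r$, so the substitution ``$p+s=p'+s'$'' does not follow as written; take $s:=-q+r$ and $s':=-q'+r$ instead, after which $q+s=r=q'+s'$ and $p+s=(p-q)+r=(p'-q')+r=p'+s'$ hold and your cancellation in $H$ goes through. For the homomorphism step you leave vague, the paper proceeds by first proving that $f$ is invariant under conjugation on $G^+$ (if $a+b=b+a'$ with $a,a'\in G^+$, then $f(a)=f(a')$, using only additivity on $G^+$ and that $H$ is Abelian) and then rearranges $-x_1+x_2+y_1-y_2$ into a single positive equation via conjugates; this is exactly the ``careful bookkeeping'' you anticipate, and your sketch can be completed along the same lines.
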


\begin{proof}  Since $H$ is a Dedekind complete $\ell$-group, due to
\cite[Cor V.20]{Fuc}, $H$ is Abelian.

Therefore, $f(x):=\bigvee D(x)$ is a well-defined mapping for all $x
\in G^+.$ It is clear that $f(0)=0$ and we are now going to show
that $f$ is additive on $G^+.$

Let $x,y \in G^+$ be given. For all decompositions
$$ x = x_1 +\cdots+x_n \ \mbox{and} \ y=y_1+\cdots +y_k$$
with all $x_i,y_j \in G^+,$ we have $x+y = x_1+\cdots+x_n +
y_1+\cdots + y_k,$ that yields

$$ \sum d(x_i)+\sum d(y_j) \le f(x+y).
$$
Therefore, $u+v \le f(x+y)$ for all $u\in D(x)$ and $b\in D(y).$
Since $H$ is Dedekind complete, $\bigvee$ is distributive with
respect to $+:$
\begin{eqnarray*}
f(x)+f(y)&=& \left(\bigvee D(x)\right) +f(y) = \bigvee_{u \in D(x)} (u+f(y))\\
&=& \bigvee_{u\in D(x)} \left(u+ \left(\bigvee D(y)\right)\right) =
\bigvee_{u\in
D(x)}\bigvee_{v \in D(y)} (u+v)\\
&\le& f(x+y).
\end{eqnarray*}

Conversely, let $x+y=z_1+\cdots+z_n,$ where each $z_i \in G^+.$ Then
(RDP) implies that there are elements $x_1,\ldots,x_n, y_1,\ldots,
y_n \in G^+$ such that $x = x_1+\cdots+x_n,$ $y = y_1+\cdots+y_n$
and $z_i = x_i+y_i$ for $i=1,\ldots,n.$  This yields
$$
\sum_i d(z_i) \le \sum_i (d(x_i)+d(y_i)) = \left(\sum_i
d(x_i)\right) + \left(\sum_i d(y_i)\right) \le f(x)+f(y),
$$
and therefore, $f(x+y)\le f(x)+f(y)$ and finally, $f(x+y)=f(x)+f(y)$
for all $x,y \in G^+.$

Since $G$ is directed and $H$ commutative, we can extend $f$ also to
the whole $G$ as follows.  The positive cone $G^+$ is a normal cone
that generates $G$,   so that every element $x\in G$ can be
expressed by $x= x_1-x_2 = - x_1'+x_2'$ for some
$x_1,x_2,x_1',x_2'\in G^+.$ Then $x_1'+x_1 = x_2'+x_2$ so that
$f(x_1)-f(x_2)=-f(x_1')+f(x_2).$ This implies that if also $x=
y_1-y_2$ for some $y_1,y_2 \in G^+,$ then
$f(x_1)-f(x_2)=f(y_1)-f(y_2),$ so that $f$ can be extended by
$f(x):= f(x_1)-f(x_2)$ whenever $x=x_1-x_2$ for $x_1,x_2 \in G^+.$
In a similar way, we have also $f(x)=-f(z_1)+f(z_2)= f(z_2)-f(z_1)$
whenever $x = -z_1+z_2$ for some $z_1,z_2 \in G^+.$

Let $a \in G^+$ and $b\in G$ be arbitrary.  Then there is an element
$a'\in G^+$ such that $a+b = b+a',$ so that $a'= -b+a+b.$  Let $b =
b_1-b_2 = -b_1+b_2,$ where $b_1,b_2,b_1',b_2'\in G^+.$ Hence,
\begin{eqnarray*}
& a+ b_1 - b_2= -b_1' +b_2'+a'\\
& b_1' +a +b_1 = b_2'+a'+b_2\\
& f(b_1') +f(a) +f(b_1) = f(b_2')+f(a')+f(b_2)\\
& f(a)+f(b)= f(b)+f(a')
\end{eqnarray*}
and the commutativity of $H$ entails $f(a)=f(a').$

Similarly, there is a unique element $a''\in G^+$ such that $a'' +b
= b+a$ and therefore, $f(a'')=f(a).$

Now we show that $f(x+y)=f(x)+f(y)$ for all $x,y \in G.$  Then $x+y
= u-v,$ $x= -x_1+x_2$ and $y = y_1- y_2$ for some $x_1,x_2,y_1,y_2,
u,v \in G^+.$ Then $u-v = -x_1 +x_2 +y_1-y_2.$  There are unique
elements $x_2',y_1'\in G^+$ such that

\begin{eqnarray*}
& u-v = -x_1+x_2 +y_1-y_2\\
& u-v = -x_2 - y_1 + x_2'+ y_1'\\
& y_2+ x_1 + u = x_2' + y_1' +v\\
& f(y_2)+ f(x_1) + f(u) = f(x_2') + f(y_1') +f(v)\\
& f(y_2)+ f(x_1) + f(u) = f(x_2) + f(y_1) +f(v)\\
& f(u)-f(v) = -f(x_1) + f(x_2)+ f(y_1) - f(y_2)\\
& f(x+y)= f(x)+f(y).
\end{eqnarray*}

This implies that $f$ is a group homomorphism.
\end{proof}

Let $X$ and $Y$ be two posets. A mapping $f:X \to Y$ is said to be
{\it relatively bounded} provided that given any subset $W$ of $X$
which is bounded (above and below) in $X$, the set $f(W)$ is bounded
in $Y.$

We recall that a group homomorphism $f$ from one po-group, $G$, into
another one, $H,$ is {\it positive} if $f(G^+)\subseteq H^+.$

\begin{proposition}\label{pr:3.2}  Let $G$ be a directed po-group
with {\rm (RDP)},  let $H$ be a Dedekind complete $\ell$-group, and
let $f:G\to H$ be a group homomorphism. Then $f$ is relatively
bounded if and only if $f=g-h$ for some positive homomorphisms $g,h:
G\to H.$
\end{proposition}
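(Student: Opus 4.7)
The plan is to prove both directions separately, with the nontrivial direction following the template of Proposition 3.1. Throughout I use that $H$ is Abelian by the Fuchs result cited in the proof of Proposition 3.1.

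For the easy direction $(\Leftarrow)$, suppose $f = g-h$ with $g,h:G\to H$ positive homomorphisms, and let $W\subseteq G$ be bounded with $a\le w\le b$ for all $w\in W$. Since the ordering on $G$ is compatible with translations, $a\le w\le b$ gives $-a+w\ge 0$ and $-w+b\ge 0$, hence $g(a)\le g(w)\le g(b)$ and similarly for $h$. Using that $H$ is Abelian,
\[ g(a)-h(b)\le g(w)-h(w)=f(w)\le g(b)-h(a), \]
so $f(W)$ is bounded in $H$.

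For the main direction $(\Rightarrow)$, assume $f$ is relatively bounded. I would define $d:G^+\to H$ by
\[ d(x):=\bigvee\{f(y):0\le y\le x\}, \]
which exists because $[0,x]$ is bounded in $G$, hence $f([0,x])$ is bounded in $H$, and $H$ is Dedekind complete. Noting $d(x)\ge f(0)=0$ and $d(x)\ge f(x)$, I would first check that $d$ is subadditive: given $0\le z\le x+y$, use (RDP) (applied to $z+((x+y)-z)=x+y$) to write $z=z_1+z_2$ with $0\le z_1\le x$, $0\le z_2\le y$, so $f(z)=f(z_1)+f(z_2)\le d(x)+d(y)$, and taking the sup over $z$ gives $d(x+y)\le d(x)+d(y)$.

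Next I would verify the hypothesis of Proposition 3.1, namely that $D(x)$ from (3.1) is bounded above. For any decomposition $x=x_1+\cdots+x_n$ in $G^+$, using the distributivity of $\bigvee$ over $+$ in the Abelian Dedekind complete $\ell$-group $H$,
\[ d(x_1)+\cdots+d(x_n)=\bigvee\bigl\{f(y_1)+\cdots+f(y_n):0\le y_i\le x_i\bigr\}=\bigvee\bigl\{f(y_1+\cdots+y_n):0\le y_i\le x_i\bigr\}. \]
Since $0\le y_1+\cdots+y_n\le x$, each such element is $\le d(x)$, so $D(x)$ is bounded above by $d(x)$. Proposition 3.1 then produces a group homomorphism $g:G\to H$ with $g(x)=\bigvee D(x)$ for $x\in G^+$; the trivial decomposition $x=x$ shows $g(x)\ge d(x)$, so in fact $g(x)=d(x)\ge 0$ on $G^+$, i.e.\ $g$ is positive. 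Setting $h:=g-f$ (a homomorphism because $H$ is Abelian), for $x\in G^+$ we get $h(x)=d(x)-f(x)\ge 0$, so $h$ is also positive, and $f=g-h$ as required.

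The main obstacle is the bounding step that $D(x)\le d(x)$: it is crucial there that $\bigvee$ distributes over finite sums in the Abelian Dedekind complete $\ell$-group $H$, so that the seemingly unbounded sum $d(x_1)+\cdots+d(x_n)$ collapses to a supremum over elements of the form $f(y_1+\cdots+y_n)$ with $y_1+\cdots+y_n\le x$. The use of (RDP) itself is confined to the subadditivity check and is straightforward, while the extension from $G^+$ to all of $G$ and the group-homomorphism property of $g$ come for free from the invocation of Proposition 3.1.
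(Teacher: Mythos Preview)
Your proof is correct. The easy direction matches the paper exactly, and in the main direction you also reduce to Proposition~3.1, but with a different choice of the auxiliary map $d$.

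The paper takes $d(x)=f(x)\vee 0$. Subadditivity of this $d$ is immediate from the $\ell$-group inequality $(a+b)\vee 0\le(a\vee 0)+(b\vee 0)$ and needs no (RDP). To bound $D(x)$, however, the paper invokes the lattice identity from \cite[Lem~1.21]{Goo},
\[
\sum_{i=1}^n\bigl(f(x_i)\vee 0\bigr)=\Bigl(\bigvee_{A\subseteq\{1,\dots,n\}}f\bigl(\textstyle\sum_{i\in A}x_i\bigr)\Bigr)\vee 0,
\]
and then uses that each partial sum $\sum_{i\in A}x_i$ lies in $[0,x]$, so its $f$-value is below a fixed bound $b$ for $f([0,x])$; this gives $D(x)\le b\vee 0$.

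Your choice $d(x)=\bigvee f([0,x])$ reverses the balance: subadditivity now genuinely uses (RDP$_0$) (which follows from the assumed (RDP)), but the bound on $D(x)$ becomes the clean observation $d(x_1)+\cdots+d(x_n)\le d(x)$, obtained by the same distributivity of $+$ over $\bigvee$ that Proposition~3.1 itself uses. A pleasant by-product of your choice is that $\bigvee D(x)=d(x)$, so the homomorphism $g$ produced by Proposition~3.1 actually equals your $d$ on $G^+$; with the paper's $d$ one only gets $g\ge d$. Both routes are equally valid here, and yours avoids the external lattice identity at the cost of using (RDP) one extra time.
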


\begin{proof}
Again, $H$ is an Abelian $\ell$-group.  Assume that $f = g-h$ for
some two positive group homomorphisms $g,h: G\to H.$  If $W
\subseteq [a,b]$ in $G,$ then $g(W)\subseteq [g(a),g(b)]$ and
$h(W)\subseteq [h(a),h(b)].$  Then $g(a)-h(b) \le g(b)-h(a)$ and
$f(W)\subseteq [g(a)-h(b), g(b)-h(a)]$ that proves that $f$ is
relatively bounded.

Conversely, let $f$ be relatively bounded. If we set $d(x):=
f(x)\vee 0$ for all $x\in G^+,$ then $d(0)=0.$  For all $x,y \in
G^+$, we have
$$
d(x+y) = (f(x)+f(y))\vee 0 \le (f(x)\vee 0)+ (f(y)\vee 0) =
d(x)+d(y),
$$
so that $d$ is subadditive.

Let us define $D(x)$ by (3.1) for each $x \in G^+$. We assert that
$D(x)$ is bounded above in $H.$  By the assumption, there are
elements $a,b \in H$ such that $f([0,x])\subseteq [a,b].$  Fix a
decomposition $x = x_1+\cdots+x_n$ with $x_i\in G^+$ for each
$i=1,\ldots, n.$ By \cite[Lem 1.21]{Goo}, we have

$$ \sum_{i=1}^nd(x_i) =\sum_{i=1}^n (f(x_i)\vee 0) =
\left(\bigvee_{A \in 2^n}\left(\sum_{i\in A} f(x_i)\right)\right)
\vee 0.
$$
For all $A \in 2^n,$ we have
$$ 0 \le \sum_{i\in A}x_i \le x,
\mbox{and}\ \sum_{i\in A} f(x_i) = f(\sum_{i\in A}x_i) \le b.
$$
Hence, $d(x_1) + \cdots+d(x_n) \le b\vee 0,$ and consequently,
$b\vee 0$ is an upper bound for $D(x)$ that proves the assertion.

By Proposition \ref{pr:3.1}, there exists a group homomorphism
$g:G\to H$ such that $g(x) = \bigvee D(x)$ for all $x \in G^+.$
Since $g(x)\ge d(x) \ge 0,$ $g(x)$ is a positive homomorphism, and
$g(x)\ge d(x) \ge f(x)$ for all $x \in G^+.$ Hence, $ h = g-f$ is a
positive homomorphism, too.
\end{proof}

Let $G$ be a directed po-group and $H$ an Abelian po-group.  The
set, $\mbox{\rm Hom}(G,H),$ of all group homomorphisms from $G$ into
$H$ is an Abelian group. Given $f,g \in \mbox{\rm Hom}(G,H)$, we
define $f\le^+ g$ whenever $g-f$ is a positive group homomorphism.
Then $\le^+$ is a partial order and $\mbox{\rm Hom}(G,H)$ is  an
Abelian po-group with respect to this partial order. Indeed, it is
easy to see that $\le^+$ is a preorder.  Assume now $(f-g)(x)\ge 0$
and $(g-f)(x)$ for all $x \in G^+,$ and thus $f(x)=g(x)$ for all $x
\in G^+.$ The group $G$ is directed and $G^+$ is a normal cone of
$G$ that  generates $G$ as a group. If $x=x_1-x_2$ with $x_1,x_2\in
G^+,$ then $f(x)=f(x_1)-f(x_2)=g(x_1)-g(x_2)=g(x),$ we see that
$f(x)=g(x)$ for all $x \in G.$

It is now clear that  the positive cone of $\mbox{Hom}(G,H)$
consists of all positive homomorphisms from $G$ into $H.$ It is
non-void because the zero homomorphism from $G$ into $H$ belongs to
it.

\begin{proposition}\label{pr:3.3} Let $G$ be a directed po-group
with {\rm (RDP)},  let $H$ be a Dedekind complete $\ell$-group, and
let $B(G,H)$ be the set of all relatively bounded group
homomorphisms from $G$ to $H.$  Then $B$ is a nonempty o-ideal of
$\mbox{\rm Hom}(G,H).$
\end{proposition}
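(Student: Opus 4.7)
The plan is to verify separately the conditions defining a nonempty o-ideal of the Abelian po-group $\mbox{\rm Hom}(G,H)$: nonemptiness, closure under the group operations, convexity, and directedness. The main leverage is Proposition~\ref{pr:3.2}, which expresses any relatively bounded homomorphism as a difference of positive homomorphisms; once this is available, directedness follows essentially for free. I use throughout that $H$, being a Dedekind complete $\ell$-group, is automatically Abelian (by \cite[Cor V.20]{Fuc}).

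Nonemptiness and the subgroup property are routine: the zero homomorphism is trivially in $B(G,H)$, and for closure under subtraction, if $W\subseteq [a,b]$ in $G$ and $f_i(W)\subseteq [u_i,v_i]$ in $H$ for $i=1,2$, then the commutativity of $H$ gives $(f_1-f_2)(W)\subseteq [u_1-v_2,\,v_1-u_2]$. For convexity, suppose $f_1\le^+ f\le^+ f_2$ in $\mbox{\rm Hom}(G,H)$ with $f_1,f_2\in B(G,H)$. I would set $h:=f-f_1$ and $g:=f_2-f_1\in B(G,H)$, so that $0\le^+ h\le^+ g$ and $g$ is a positive (hence order-preserving) homomorphism. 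For any bounded $W\subseteq [a,b]$ in $G$ and any $w\in W$, both $w-a$ and $b-a$ lie in $G^+$ with $w-a\le b-a$, whence
$$
0\le h(w-a)\le g(w-a)\le g(b-a),
$$
so $h(a)\le h(w)\le h(a)+g(b-a)$. Thus $h(W)$ is bounded, giving $h\in B(G,H)$ and $f=h+f_1\in B(G,H)$.

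Finally, for directedness, given $f\in B(G,H)$, Proposition~\ref{pr:3.2} provides positive homomorphisms $g,h:G\to H$ with $f=g-h$. Every positive homomorphism $p$ is relatively bounded because $W\subseteq [a,b]$ forces $p(W)\subseteq [p(a),p(b)]$; hence $g,h\in B(G,H)$. So every element of $B(G,H)$ is a difference of positive elements of $B(G,H)$, which, by the characterization recalled at the start of Section~3, is equivalent to $B(G,H)$ being directed; explicitly, if $f_1,f_2\in B(G,H)$ decompose as $f_i=g_i-h_i$ with $g_i,h_i$ positive in $B(G,H)$, then $g_1+g_2\in B(G,H)$ is a common upper bound. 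No step presents a genuine obstacle — all the substantive work has been absorbed into Proposition~\ref{pr:3.2} — the only matters requiring care are invoking the Abelianness of $H$ when flipping signs of homomorphisms and remembering that the bounds witnessing relative boundedness in $B(G,H)$ must come from inside $B(G,H)$ itself.
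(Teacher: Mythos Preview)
Your proof is correct and follows the same overall strategy as the paper, with Proposition~\ref{pr:3.2} doing the heavy lifting for directedness. The only noteworthy difference is in the convexity step: the paper reduces to the case $0\le^+ f\le^+ g$ and then invokes Proposition~\ref{pr:3.2} once more to write $g=g_1-g_2$ with $g_1,g_2$ positive, observing that $f\le^+ g_1$ exhibits $f=g_1-(g_1-f)$ as a difference of positive homomorphisms; you instead bound $h(W)$ directly from the positivity of $h$ and $g-h$, avoiding a second appeal to Proposition~\ref{pr:3.2}. Likewise, you verify the subgroup property by a direct bound on $(f_1-f_2)(W)$, whereas the paper gets it by identifying $B(G,H)$ with the subgroup generated by the positive homomorphisms. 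Your route is slightly more elementary on these two points; the paper's route is a bit slicker in that it shows in one stroke that $B(G,H)$ is exactly the directed subgroup generated by positive homomorphisms.
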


\begin{proof} Because the zero homomorphism from $G$ to $H$ is a
relatively bounded group homomorphism, $B(G,H)$ is non-void, and
according to Proposition \ref{pr:3.2}, $B(G,H)$ equals  the subgroup
of $\mbox{\rm Hom}(G,H)$ generated by the positive homomorphisms.
Therefore, $B(G,H)$ is a directed subgroup of $\mbox{\rm Hom}(G,H).$

Given $f \in \mbox{\rm Hom}(G,H)$ and $g \in B(G,H)$ such that
$0\le^+ f \le^+ g,$ write $g=g_1-g_2$ for some positive
homomorphisms $g_1,g_2 \in \mbox{\rm Hom}(G,H).$ Since $f\le^+ g\le
^+  g_1,$ we have $f=g_1 - (g_1-f)$ with $g_1$ and $g_1-f$ positive
homomorphisms, and hence, $f \in B(G,H).$ This proves that $B(G,H)$
is an o-ideal.
\end{proof}

\begin{theorem}\label{th:3.4} Let $G$ be a directed po-group
with {\rm (RDP)} and  let $H$ be a Dedekind complete $\ell$-group.

\begin{enumerate}

\item[(a)] The group $B(G,H)$ of all relatively bounded group
homomorphisms from $G$ to $H$ is a Dedekind complete $\ell$-group.

\item[(b)] If $\{f_i\}_{i\in I}$ is a nonempty system of $B(G,H)$
that is bounded above, and if $d(x)=\bigvee_i f_i(x)$ for all $x \in
G^+,$ then
$$ \left(\bigvee_i f_i\right)(x) = \bigvee\{d(x_1)+\cdots + d(x_n):
x= x_1+\cdots + x_n, \ x_1,\ldots, x_n \in G^+\}
$$
for all $x \in G^+.$

\item[(c)] If $\{f_i\}_{i\in I}$ is a nonempty system of $B(G,H)$
that is bounded below, and if $e(x)=\bigwedge_i f_i(x)$ for all $x
\in G^+,$ then
$$ \left(\bigwedge_i f_i\right)(x) = \bigwedge\{e(x_1)+\cdots + e(x_n):
x= x_1+\cdots + x_n, \ x_1,\ldots, x_n \in G^+\}
$$
for all $x \in G^+.$

\end{enumerate}
\end{theorem}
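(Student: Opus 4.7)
The plan is to establish (b) first by a direct application of Proposition \ref{pr:3.1}, deduce (c) from (b) by negation (using that $H$ is Abelian by \cite[Cor V.20]{Fuc}), and assemble (a) from (b) and (c) together with the description of $B(G,H)$ given in Proposition \ref{pr:3.2}.

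For (b), given a system $\{f_i\}_{i\in I}$ in $B(G,H)$ bounded above by some $g\in B(G,H)$, I would set $d(x):=\bigvee_i f_i(x)$ for $x\in G^+$; this pointwise supremum exists in $H$ because $H$ is Dedekind complete and each $f_i(x)\le g(x)$. Since $H$ is Abelian, $d(x+y)=\bigvee_i(f_i(x)+f_i(y))\le(\bigvee_i f_i(x))+(\bigvee_i f_i(y))=d(x)+d(y)$, so $d$ is subadditive. The set $D(x)$ from (3.1) is bounded above: for any decomposition $x=x_1+\cdots+x_n$ one has $\sum d(x_i)\le\sum g(x_i)=g(x)$. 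Proposition \ref{pr:3.1} then yields a group homomorphism $f:G\to H$ with $f(x)=\bigvee D(x)$ for all $x\in G^+$. To see $f\in B(G,H)$, fix $i_0\in I$: then $f(x)\ge d(x)\ge f_{i_0}(x)$ on $G^+$, so $f-f_{i_0}$ is a positive homomorphism, and $f=f_{i_0}+(f-f_{i_0})$ lies in $B(G,H)$ by Proposition \ref{pr:3.2}. Clearly $f\ge^+ f_i$ for every $i$. If $h\in B(G,H)$ satisfies $h\ge^+ f_j$ for all $j$, then $h(x_i)\ge d(x_i)$ for each summand in any decomposition $x=x_1+\cdots+x_n$, whence $h(x)=\sum h(x_i)\ge\sum d(x_i)$; taking the supremum over decompositions gives $h(x)\ge f(x)$ on $G^+$, so $h\ge^+ f$. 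Thus $f=\bigvee_i f_i$ with the stated formula.

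For (c), let $g_i:=-f_i$. A lower bound for $\{f_i\}$ yields, after negation, an upper bound for $\{g_i\}$ in $B(G,H)$ (using that $H$ is Abelian). By (b), $\bigvee_i g_i$ exists, and then $\bigwedge_i f_i=-\bigvee_i g_i$. Applying the formula of (b) with pointwise supremum $\bigvee_i g_i(x)=-e(x)$ and negating gives the infimum formula. For (a), any pair $\{f,g\}\subseteq B(G,H)$ is bounded above: writing $f=f_1-f_2$ and $g=g_1-g_2$ with positive homomorphisms via Proposition \ref{pr:3.2}, one has $f\le^+ f_1+g_1$ and $g\le^+ f_1+g_1$. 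Hence (b) produces $f\vee g$ in $B(G,H)$, and dually (c) produces $f\wedge g$. Combined with Proposition \ref{pr:3.3}, this makes $B(G,H)$ an $\ell$-group. Dedekind completeness is then immediate from (b) and (c), which furnish suprema of all bounded-above subsets and infima of all bounded-below subsets.

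The main obstacle is verifying that the $f$ built via Proposition \ref{pr:3.1} is the least upper bound in the order $\le^+$, not merely a pointwise upper bound for $\{f_i\}$ on $G^+$: the decisive step is that any competing upper bound $h$ is itself additive, so its pointwise comparison with each $d(x_i)$ upgrades, via addition, to a comparison with each element $\sum d(x_i)$ of $D(x)$, and this is what allows the formula $f(x)=\bigvee D(x)$ to be bounded from above by $h(x)$. A secondary point requiring care is that negation produces the infimum cleanly only because $H$—and hence $B(G,H)$—is Abelian; this should be invoked from \cite[Cor V.20]{Fuc} at the outset and used uniformly.
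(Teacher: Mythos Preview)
Your proposal is correct and follows essentially the same approach as the paper: build the supremum via Proposition~\ref{pr:3.1} applied to the pointwise supremum $d$, verify it is the least upper bound in $\le^+$ by comparing against an arbitrary upper bound $h$, obtain (c) from (b) by the order anti-automorphism $z\mapsto -z$ in the Abelian group $H$, and deduce (a) from directedness of $B(G,H)$ (your explicit bound $f_1+g_1$ is just an unpacking of what the paper cites from Proposition~\ref{pr:3.2}). The only cosmetic difference is that the paper argues $f\in B(G,H)$ via $f-f_i\in B(G,H)^+$ and the subgroup property (Proposition~\ref{pr:3.3}), whereas you invoke Proposition~\ref{pr:3.2}; both are immediate.
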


\begin{proof} Let $g \in B(G,H)$ be an upper bound for $\{f_i\}.$
For any $x \in G^+$, we have $f_i(x)\le g(x),$ so that the mapping
$d(x)=\bigvee_i f_i(x)$ defined on $G^+$ is a a subadditive mapping.
For any $x \in G^+$ and any decomposition $x = x_1+\cdots + x_n$
with all $x_i \in G^+,$ we conclude $d(x_1)+\cdots+ d(x_n)\le
g(x_1)+\cdots + g(x_n)=g(x).$  Hence, $g(x)$ is an upper set for
$D(x)$ defined by (3.1).

Proposition \ref{pr:3.1} entails there is a group homomorphism $f:G
\to H$ such that $f(x)=\bigvee D(x).$ For every $x \in G^+$ and
every $f_i$ we have $f_i(x)\le d(x)\le f(x)$ that gives $f_i \le^+
f.$ The mappings $f-f_i$ are positive homomorphisms belonging bo
$B(G,H)$ that gives $f \in B(G,H).$ If $h \in B(G,H)$ such that
$f_i\le^+ h$ for any $i\in I,$ then $d(x)\le h(x)$ for any $x \in
G^+.$ As above, we can show that $h(x)$ is also an upper bound for
$D(x)$, whence $f(x)\le h(x)$ for any $x \in G^+$ that gives $f\le
^+h.$ In other words, we have proved that $f$ is the supremum of
$\{f_i\}_{i\in I},$ and its form is given by (b).

Now if we apply  the order anti-automorphism $z\mapsto - z$ in $H$,
we see that infima exist in $B(G,H)$ for any bounded below system
$\{f_i\}_{i\in I},$ and their form is given by (c).

By Proposition \ref{pr:3.2}, $B(G,H)$ is directed, combining (b) and
(c), we see that $B(G,H)$ is a Dedekind complete $\ell$-group.
\end{proof}

If $H=\mathbb R,$ Theorem \ref{th:3.4} can be reformulated as
follows.

\begin{theorem}\label{th:3.5} If $G$ is a directed po-group with
{\rm (RDP)}, then the group $B(G,\mathbb R)$ of all relatively
bounded group homomorphisms from $G$ to $\mathbb R$ is a Dedekind
complete lattice ordered real vector space. Given $f_1,\ldots, f_n
\in B(G,\mathbb R)$,

\begin{eqnarray*}
\left( \bigvee_{i=1}^n f_i\right)(x) = \sup\{f_1(x_1)+\cdots
+f_n(x_n):
x = x_1+\cdots +x_n,\ x_1,\ldots, x_n \in G^+\},\\
\left( \bigwedge_{i=1}^n f_i \right)(x) = \inf\{f_1(x_1)+\cdots
+f_n(x_n):
x = x_1+\cdots +x_n,\ x_1,\ldots, x_n \in G^+\},\\
\end{eqnarray*}
for all $x \in G^+.$
\end{theorem}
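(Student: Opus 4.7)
My plan is to deduce Theorem~\ref{th:3.5} directly from Theorem~\ref{th:3.4} with $H=\mathbb{R}$. Since $\mathbb{R}$ is trivially a Dedekind complete $\ell$-group, Theorem~\ref{th:3.4}(a) immediately gives that $B(G,\mathbb{R})$ is a Dedekind complete $\ell$-group. The real vector space structure is nearly automatic: for $\lambda\in\mathbb{R}$ and $f\in B(G,\mathbb{R})$, I set $(\lambda f)(x):=\lambda f(x)$, which is still a group homomorphism into $\mathbb{R}$ (because $\mathbb{R}$ is abelian) and obviously relatively bounded. The compatibility $\lambda(f\vee g)=(\lambda f)\vee(\lambda g)$ for $\lambda\ge 0$ follows by pulling the nonnegative scalar inside the sup in the formula of Theorem~\ref{th:3.4}(b).

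The real task is to simplify the formulas of Theorem~\ref{th:3.4}(b)--(c) when applied to a finite family $\{f_1,\ldots,f_n\}$. Setting $d(y)=\max_i f_i(y)$, Theorem~\ref{th:3.4}(b) yields
\[
\left(\bigvee_{i=1}^n f_i\right)(x)=\sup\Bigl\{d(y_1)+\cdots+d(y_m):x=y_1+\cdots+y_m,\ y_j\in G^+\Bigr\};
\]
call this $A(x)$ and let $B(x)$ denote the right-hand side of the target formula. The inequality $B(x)\le A(x)$ is immediate: any decomposition $x=x_1+\cdots+x_n$ already appears in the sup defining $A(x)$ (with $m=n$), and $f_i(x_i)\le d(x_i)$ termwise. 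The remaining direction $A(x)\le B(x)$ is the heart of the argument: given any decomposition $x=y_1+\cdots+y_m$, I pick $\sigma\colon\{1,\ldots,m\}\to\{1,\ldots,n\}$ with $d(y_j)=f_{\sigma(j)}(y_j)$ and aim to construct a length-$n$ decomposition $x=z_1+\cdots+z_n$ with $\sum_i f_i(z_i)=\sum_j d(y_j)$.

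The construction rests on two observations: $G^+$ is a normal cone (conjugation-invariant, an immediate consequence of the po-group compatibility axiom applied with $a=0$), while each $f_i\colon G\to\mathbb{R}$ factors through the abelianization $G/[G,G]$ (because $\mathbb{R}$ is abelian) and is therefore invariant under conjugation. Together these yield a bubble-sort swap for adjacent summands,
\[
y_j+y_{j+1}=y_{j+1}+(-y_{j+1}+y_j+y_{j+1}),
\]
where $-y_{j+1}+y_j+y_{j+1}\in G^+$ by normality and $f_i(-y_{j+1}+y_j+y_{j+1})=f_i(y_j)$ for every $i$. Each such swap preserves both the sum $x$ and the quantity $\sum_j d(y_j)$, and (after suitably adjusting $\sigma$) lets me assume $\sigma$ is non-decreasing. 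I then merge consecutive summands sharing the same $\sigma$-value into single positive elements $z_i$ (with $z_i=0$ when no index maps to $i$), obtaining $x=z_1+\cdots+z_n$ with $\sum_i f_i(z_i)=\sum_j d(y_j)$. The infimum formula follows by applying the result to $\{-f_1,\ldots,-f_n\}$, or equivalently via the anti-automorphism $z\mapsto -z$ of $\mathbb{R}$. The main obstacle is precisely the non-commutativity of $G$, which at first seems to forbid any rearrangement of summands; it is overcome because $\mathbb{R}$-valued homomorphisms are blind to conjugation, so the bubble-sort swaps are ``free''.
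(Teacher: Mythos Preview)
Your proposal is correct and follows essentially the same route as the paper's own proof: both deduce the Dedekind-complete $\ell$-group structure from Theorem~\ref{th:3.4} with $H=\mathbb{R}$, and for the sup-formula both use the key fact that $\mathbb{R}$-valued homomorphisms are conjugation-invariant together with normality of $G^+$ to regroup an arbitrary decomposition $x=y_1+\cdots+y_m$ into one of length $n$ by collecting the $y_j$'s (up to conjugates) according to which $f_i$ realizes $\max_i f_i(y_j)$. The only cosmetic difference is that the paper phrases the harder inequality via an $\epsilon$-approximation taken from Theorem~\ref{th:3.4}(b), whereas you argue the same regrouping for every decomposition and thereby avoid the $\epsilon$; your ``bubble-sort'' description of the conjugation rearrangement is a clean way of making explicit what the paper does in its paragraph about the sets $J(1),\ldots,J(n)$.
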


\begin{proof} Due to Theorem \ref{th:3.4}, $B(G,\mathbb R)$ is a
Dedekind complete $\ell$-group. It is evident that it is a Riesz
space, i.e., a lattice ordered real vector space.

Take $f_1,\ldots,f_n \in B(G,\mathbb R)$ and let $f = f_1\vee \cdots
\vee f_n.$ For any $x \in G^+$ and $x=x_1+\cdots+x_n$ with
$x_1,\ldots, x_n \in G^+,$ we have  $f_1(x_1)+\cdots + f_n(x_n) \le
f(x_1)+\cdots + f(x_n) = f(x).$  Due to Theorem \ref{th:3.4}, given
an arbitrary real number $\epsilon >0$, there is a decomposition $x
= y_1+\cdots+y_k$ with $y_1,\ldots,y_k \in G^+$ such that

$$ \sum_{j=1}^k \max\{f_1(y_j),\ldots,f_n(y_j)\} > f(x)-\epsilon.
$$

We note that if $a \in G^+$ and $b\in G,$ the elements $a',a''\in
G^+$ such that $a+b= b+a'$ and $b+a =a''+b$ are said to be (right
and left) conjugates of $a$ by $b$. Since $\mathbb R$ is Abelian,
for any $h\in B(G,\mathbb R),$ $h(a')=h(a)=h(a'').$

If $k < n$, we can add the zero element to the decomposition, if
necessary, so that without loss of generality, we can assume that
$k\ge n.$

We decompose the set $\{1,\ldots,k\}$ into mutually disjoint sets
$J(1),\ldots,J(n)$ such that
$$J(i):=\{j \in \{1,\ldots,k\}: \max\{f_1(y_j),\ldots, f_n(y_j)\}=
f_i(y_j)\}.
$$
Assume $J(1)=\{j_{t_1},\ldots, j_{n_1}\}.$ Since $G^+$ is a normal
cone of $G$, $x$ can be expressed in the form $x=x_{j_{t_1}}+\cdots+
x_{j_{n_1}} + x_j' +\cdots + x_k',$ where $x_j',\ldots, x_k'\in G^+$
are conjugates of $x_j,\ldots,x_k.$

Let $x_1:=x_{j_{t_1}}+\cdots+ x_{j_{n_1}}$.

In a similar way, let $J(2)=\{j_{t_2},\ldots, j_{n_2}\}$ and let
$x_2 = y_{j_{t_2}}+\cdots +y_{j_{n_2}}.$ Again, we can express $x$
in the form $x = x_1 + x_2 + y_s'' +\cdots + y_k'',$ where $y_t''$'s
are appropriate  conjugates of $y_s',\ldots, y_k'.$ Processing in
this way for each $J(i) = \{j_{t_i},\ldots, j_{n_i}\},$ we define
the element $x_i = c_{t_{j_{t_i}}} +\cdots + c_{t_{j_{n_i}}},$ where
$c_{t_{j_s}}$ is an appropriate conjugate of the element
$y_{t_{j_s}}.$  Then $x = x_1+\cdots + x_n,$ and

$$ \sum_{i=1}^n f_i(x_i) = \sum_{i=1}^n \sum_{j \in
J(i)}f_i(y_j)=\sum_{i=1}^k\max\{f_1(y_j),\ldots, f_n(y_j)\} >
f(x)-\epsilon.$$

This implies $f(x)$ equals the given supremum.

The formula for $(f_1\wedge \cdots \wedge f_n)(x)$ can be obtained
applying   the order anti-automorphism $z\mapsto -z$ holding in
$\mathbb R.$
\end{proof}

\section{State Spaces of Pseudo Effect Algebras and Simplices}

Simplices are important mathematical tools that can be used also for
analysis of the state space of a pseudo effect algebra. In
particular, we show that, for any pseudo effect algebra $E$ with
(RDP)$_1,$ the state space of $E$ is either an empty set or it is a
nonempty simplex. This result generalizes analogous result holding
for effect algebras, see \cite[Thm 5.1]{Dvu}.

Now we present some elements of simplices. For a good source about
convex sets, see the monographs \cite{Alf, Phe, Goo, AlSc}.

Let $K_1, K_2$ be two convex sets. A mapping $f:K_1\to K_2$ is said
to be {\it affine} if it preserves all convex combinations, and if
$f$ is also injective and surjective such that also $f^{-1}$ is
affine, $f$ is an {\it affine isomorphism} and $K_1$ and $K_2$ are
{\it affinely isomorphic}.

We recall that a {\it convex cone} in a real linear space $V$ is any
subset $C$ of  $V$ such that (i) $0\in C,$ (ii) if $x_1,x_2 \in C,$
then $\alpha_1x_1 +\alpha_2 x_2 \in C$ for any $\alpha_1,\alpha_2
\in \mathbb R^+.$  A {\it strict cone} is any convex cone $C$ such
that $C\cap -C =\{0\},$ where $-C=\{-x:\ x \in C\}.$ A {\it base}
for a convex cone $C$ is any convex subset $K$ of $C$ such that
every non-zero element $y \in C$ may be uniquely expressed in the
form $y = \alpha x$ for some $\alpha \in \mathbb R^+$ and some $x
\in K.$

We recall that in view of \cite[Prop 10.2]{Goo}, if $K$ is a
non-void convex subset of $V,$ and if we set

$$ C =\{\alpha x:\ \alpha \in \mathbb R^+,\ x \in K\},
$$
then $C$ is a convex cone in $V,$ and $K$ is a base for $C$ iff
there is a linear functional $f$ on $V$ such that $f(K) = 1$ iff $K$
is contained in a hyperplane in $V$ which misses the origin.

Any strict cone $C$ of $V$ defines a partial order $\le_C$ via $x
\le_C y$ iff $y-x \in C.$ It is clear that $C=\{x \in V:\ 0 \le_C
x\}.$ A {\it lattice cone} is any strict convex cone $C$ in $V$ such
that $C$ is a lattice under $\le_C.$

A {\it simplex} in a linear space $V$ is any convex subset $K$ of
$V$ that is affinely isomorphic to a base for a lattice cone in some
real linear space. A  simplex $K$ in a locally convex Hausdorff
space is said to be (i) {\it Choquet} if $K$ is compact, and (ii)
{\it Bauer} if $K$ and $\partial_e K$ are compact, where $\partial_e
K$ is the set of extreme points of $K.$

For example, for the important quantum mechanical  example, if $H$
is a separable complex Hilbert space, ${\mathcal S}({\mathcal
E}(H))$ is not a simplex due to \cite[Thm 4.4]{AlSc} or \cite[Ex
4.2.6]{BrRo}, where $\mathcal E(H)$ is the system of all Hermitian
operators on a Hilbert space that are between the zero operator and
the identity operator. On the other hand, the state space of a
commutative C$^*$-algebra and the trace space of a general
C$^*$-algebra are Choquet simplices, see \cite[p. 7]{AlSc} or
\cite[Ex 4.2.6]{BrRo}.

Let $(G,u)$ be a unital  po-group with strong unit. A {\it state} on
$(G,u)$ we understand any positive homomorphism $s: G \to \mathbb R$
that is normalized, i.e. $s(u)=1.$  Let $\mathcal S(G,u)$ be the set
of all states on $(G,u).$  If $(G,u)$ is an Abelian po-group, due to
\cite[Cor 4.3]{Goo}, $\mathcal S(G,u)$ is always nonempty, whenever
$G\ne \{0\}.$ This is not true, in general, for non-Abelian unital
po-groups, even not for unital $\ell$-groups, see \cite[Cor
4.7]{Dvu}. On the other hand, if $(G,u)$ is a linearly ordered, then
$\mathcal S(G,u)$ is a singleton, \cite[Thm 5.6]{Dvu}. It is
possible to show \cite[Prop 4.3, 4.6]{Dvu} that for a unital
$\ell$-group $(G,u),$ a state $s$ is extremal iff $\Ker(s):=\{g\in
G: s(|g|)=0\}$ is a maximal $\ell$-ideal (= lattice ordered o-ideal)
that is normal. Therefore, $\mathcal (G,u)$ is non-void iff the
unital $\ell$-group $(G,u)$ has at least one maximal $\ell$-ideal
that is also normal.

In a similar way as for pseudo effect algebras, we can define an
extremal state, and let $\partial_e \mathcal S(G,u)$ be the set of
extremal states on $(G,u)$.  We say that a net of states,
$\{s_\alpha\},$ on $(G,u)$ {\it converges weakly} to a state, $s,$
if $\lim_\alpha s_\alpha (g) = s(g)$ for any $g \in G.$  Then
$\mathcal S(G,u)$ is a compact convex Hausdorff space, and due to
the Krein--Mil'man Theorem, see \cite[Thm 5.17]{Goo}, every state on
$(G,u)$ is a weak limit of a net of convex combinations of extremal
states.

\begin{proposition}\label{pr:4.1}  Let $E=\Gamma(G,u),$ where
$(G,u)$ is a unital po-group satisfying {\rm (RDP)}. Then the
restriction of every state $s$ on $E$ gives a state on $E,$ and
conversely, every state on $E$ can be uniquely extended to a state
on $(G,u).$  Moreover, the state spaces $\mathcal S(G,u)$ and
$\mathcal S(E)$ are affinelly homeomorphic.
\end{proposition}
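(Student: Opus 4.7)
The plan is to establish the correspondence in both directions, then upgrade the bijection to an affine homeomorphism.

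First I would show that restriction is well-defined: if $s \in \mathcal S(G,u)$ and $a \in E = [0,u]$, then $0 = s(0) \le s(a) \le s(u) = 1$ because $s$ is positive and normalized, and if $a+b \le u$ in $E$ then $s(a+b) = s(a) + s(b)$ since $s$ is a group homomorphism. So $s|_E \in \mathcal S(E)$. The map $\rho : \mathcal S(G,u) \to \mathcal S(E)$ defined by $\rho(s) = s|_E$ is clearly affine.

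The main work is to construct the extension. Given $\sigma \in \mathcal S(E)$, I would define $\hat\sigma : G^+ \to \mathbb R$ as follows. Since $u$ is a strong unit, any $g \in G^+$ satisfies $g \le nu$ for some $n$, and using (RDP) (applied repeatedly to $g + (nu - g) = u + u + \cdots + u$) we can decompose $g = g_1 + \cdots + g_n$ with each $g_i \in [0,u] = E$. Set $\hat\sigma(g) := \sum_i \sigma(g_i)$. The key obstacle here, and the step that uses (RDP) essentially, is well-definedness: if $g = a_1 + \cdots + a_m = b_1 + \cdots + b_k$ are two such decompositions with $a_i,b_j \in E$, then (RDP) (iterated through a standard common-refinement argument, e.g.\ the Schreier-type argument used throughout this subject) produces a common refinement $g = \sum_{i,j} c_{ij}$ with $a_i = \sum_j c_{ij}$ and $b_j = \sum_i c_{ij}$, all $c_{ij} \in E$. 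Additivity of $\sigma$ on $E$ then gives $\sum_i \sigma(a_i) = \sum_{i,j}\sigma(c_{ij}) = \sum_j \sigma(b_j)$. This is the technical heart of the proof.

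Once $\hat\sigma$ is well-defined on $G^+$, additivity $\hat\sigma(g + g') = \hat\sigma(g) + \hat\sigma(g')$ follows by concatenating decompositions. Since $G$ is directed (every $g \in G$ equals $g_1 - g_2$ with $g_i \in G^+$), I extend $\hat\sigma$ to $G$ by $\hat\sigma(g_1 - g_2) := \hat\sigma(g_1) - \hat\sigma(g_2)$; independence of the representation follows from additivity on $G^+$. One then checks $\hat\sigma$ is a group homomorphism on $G$: for $g = g_1 - g_2$ and $g' = g_1' - g_2'$, pick a positive $h \in G^+$ with $-g_2 + g_1' = h_1 - h_2$ and manipulate, using that the target $\mathbb R$ is Abelian so the order of terms is immaterial. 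Positivity and $\hat\sigma(u) = \sigma(u) = 1$ are immediate, so $\hat\sigma \in \mathcal S(G,u)$. Uniqueness of the extension is forced because $G^+$ is generated as a monoid by $E$ under $+$.

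Finally, for the affine homeomorphism: the assignment $\sigma \mapsto \hat\sigma$ is clearly affine and is the two-sided inverse of $\rho$, so $\rho$ is an affine bijection. Continuity of $\rho$ is immediate from the definition of weak convergence (convergence at each $a \in E$ is entailed by convergence at each $g \in G$). For continuity of $\rho^{-1}$, if $\sigma_\alpha \to \sigma$ pointwise on $E$, then for any $g \in G^+$ we fix one decomposition $g = g_1 + \cdots + g_n$ with $g_i \in E$ and compute $\hat\sigma_\alpha(g) = \sum_i \sigma_\alpha(g_i) \to \sum_i \sigma(g_i) = \hat\sigma(g)$; the extension to $G$ follows from $\hat\sigma(g_1 - g_2) = \hat\sigma(g_1) - \hat\sigma(g_2)$. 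Alternatively, since both $\mathcal S(E)$ and $\mathcal S(G,u)$ are compact Hausdorff, a continuous bijection is automatically a homeomorphism, which shortens the argument.
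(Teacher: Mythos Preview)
Your argument is correct and follows essentially the same route as the paper: define the extension on $G^+$ via decompositions into elements of $E$, use (RDP) to obtain a common refinement $\{c_{ij}\}$ establishing well-definedness, then extend to $G$ by directedness. Your treatment of the affine-homeomorphism part (explicit continuity of the inverse, or the compact--Hausdorff shortcut) is in fact more detailed than the paper's, which simply asserts that $s\mapsto\hat s$ is injective, surjective and continuous.
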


\begin{proof} It is evident that the restriction of any state on
$(G,u)$ is a state on $E$.  Conversely, let $s$ be a state on $E$.
We extend $s$ onto a state $\hat s$ defined on $G^+$ via $\hat s(x)
= s(x_1)+\cdots+ s(x_n)$ whenever $x = x_1+\cdots+ x_n,$ where
$x_1,\ldots, x_n \in E.$  We show that $\hat s$ is well-defined,
indeed, let $x = y_1+\cdots+y_m$ with $y_1,\ldots, y_m\in E.$ The
(RDP) entails that there is a finite system $\{c_{ij}: i=1,\ldots,n
,\ j = 1,\ldots, m\}$ from $G^+$ such that every $x_i = \sum_{j=1}^m
c_{ij}.$  This implies each $c_{ij}$ is from $E.$ Check:
$\sum_{i=1}^n s(x_i) = \sum_{i=1}^n \sum_{j=1}^m s(c_{ij}) =
\sum_{j=1}^m s(y_j).$

Since $\hat s$ is additive on $G^+$ and $G^+$ generates $G,$ $\hat
s$ can be easily extended to a unique state on the whole $G.$

Therefore, the mapping $s\mapsto \hat s$ defined on $\mathcal S(E)$
is injective, surjective and continuous. If $s \in \partial_e
\mathcal S(E),$ then $\hat s \in \partial_e \mathcal S(G,u),$ and
vice versa.

Therefore, the state spaces $\mathcal S(E)$ and $\mathcal S(G,u)$
are affinelly homeomorphic.
\end{proof}

\begin{theorem}\label{th:4.2}  If $(G,u)$ is a unital po-group with
{\rm (RDP)}, then either $\mathcal S(G,u)$ is empty or it is a
nonempty  Choquet simplex. In addition, the same is true for
$\mathcal S(\Gamma(G,u)).$
\end{theorem}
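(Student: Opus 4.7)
The plan is to identify $\mathcal{S}(G,u)$ as a base for a lattice cone and invoke compactness. The non-Abelian technology from Section 3 has been engineered precisely for this: Theorem \ref{th:3.5} tells us that $V:=B(G,\mathbb R)$ is a Dedekind complete lattice-ordered real vector space, so its positive cone $V^+$ is automatically a lattice cone in the sense of the simplex definition. The goal is then to realize $\mathcal{S}(G,u)$ as a base for $V^+$.

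First I would verify that $V^+$ coincides with the cone of positive group homomorphisms $G\to\mathbb R$. Every positive homomorphism $f$ is relatively bounded, because $u$ is a strong unit: for any interval $[a,b]\subseteq G$ one has $f(W-a)\subseteq[0,f(b-a)]$, so $f(W)$ is bounded; conversely Proposition \ref{pr:3.2} writes each element of $V$ as a difference of positive homomorphisms, so $V^+$ is exactly the set of positive homomorphisms. Next I would observe that the map $\Phi: V\to\mathbb R$, $\Phi(f)=f(u)$, is a linear functional, and that a nonzero $f\in V^+$ satisfies $f(u)>0$: if $f(u)=0$, then since $-nu\le g\le nu$ for some $n$ (strong unit) and $f$ is positive, we get $f(g)=0$ for all $g$, forcing $f=0$. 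Hence every nonzero element of $V^+$ has a unique representation $\alpha s$ with $\alpha=f(u)>0$ and $s=f/f(u)\in\mathcal{S}(G,u)$, so $\mathcal{S}(G,u)$ is a base for the lattice cone $V^+$, showing by definition that $\mathcal{S}(G,u)$ is a simplex.

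To upgrade this to a Choquet simplex I would embed $\mathcal{S}(G,u)$ into $\mathbb R^G$ with the product topology (which coincides with the weak topology on states). For each $g\in G$ choose $n_g\in\mathbb N$ with $-n_g u\le g\le n_g u$; then every state takes values in $[-n_g,n_g]$ at $g$, placing $\mathcal{S}(G,u)$ inside $\prod_{g\in G}[-n_g,n_g]$, compact by Tychonoff. Closedness is immediate since the defining equations $s(g+h)=s(g)+s(h)$, $s(G^+)\subseteq\mathbb R^+$, $s(u)=1$ are preserved under pointwise limits. Thus $\mathcal{S}(G,u)$, when nonempty, is compact, and so a Choquet simplex. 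For the second assertion, Proposition \ref{pr:4.1} supplies an affine homeomorphism $\mathcal{S}(\Gamma(G,u))\cong\mathcal{S}(G,u)$, transporting both the simplex structure and compactness.

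The subtle step, and the one motivating all of Section 3, is the first sentence of the plan: lattice-ordered-ness of $B(G,\mathbb R)$ in the non-Abelian setting. This is not a formality—Goodearl's development \cite[pp.~38--42]{Goo} assumed commutativity, and (RIP) does not imply (RDP) without it, which is why the author re-proves Proposition \ref{pr:3.1} through Theorem \ref{th:3.5} under (RDP). Once Theorem \ref{th:3.5} is in hand, the rest of the argument is standard simplex bookkeeping; the only care point is ensuring $\Phi(f)=f(u)$ separates nonzero positive homomorphisms from zero, which is exactly where the hypothesis that $u$ is a \emph{strong} unit (and not merely an order unit of $G^+$) is used.
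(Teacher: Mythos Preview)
Your argument is correct and follows essentially the same route as the paper: both show that $\mathcal S(G,u)$ is a base for the lattice cone $B(G,\mathbb R)^+$ (sitting in the hyperplane $\{f:f(u)=1\}$), invoke compactness to conclude it is a Choquet simplex, and then transfer to $\mathcal S(\Gamma(G,u))$ via Proposition~\ref{pr:4.1}. You spell out more explicitly than the paper why $V^+$ coincides with the positive homomorphisms, why $f(u)>0$ for nonzero $f\in V^+$, and why compactness holds, but the skeleton is identical.
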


\begin{proof} Assume that $\mathcal S(G,u)$ is nonempty. Then the
positive cone  $B(G,\mathbb R)^+$ of the Abelian Dedekind complete
$\ell$-group $B(G,\mathbb R)$ consists of all positive homomorphisms
from $G$ into $\mathbb R,$ so that $B(G,\mathbb R)^+ =\{\alpha s:
\alpha \in \mathbb R^+, s \in \mathcal S(G,u)\}.$  The set $\mathcal
S(G,u)$ lies in the hyperplane $\{f \in B(G,\mathbb R): f(u)= 1\}$
which misses the origin. Therefore, $\mathcal S(G,u)$ is a base for
$B(G,\mathbb R)^+,$ and $\mathcal S(G,u)$ is a simplex.  Since
$\mathcal S(G,u)$ is compact, $\mathcal S(G,u)$ is a Choquet
simplex.

Since $\mathcal S(G,u)$ and $\mathcal S(\Gamma(G,u))$ are affinely
homeomorphic, Proposition \ref{pr:4.1}, we conclude  that $\mathcal
S(\Gamma(G,u))$ is also a Choquet simplex.
\end{proof}

\begin{theorem}\label{th:4.3}  Let $E$ be a pseudo effect algebra
with {\rm (RDP)}$_1.$ Then $\mathcal S(E)$ is either an empty set or
it is a nonempty Choquet simplex.
\end{theorem}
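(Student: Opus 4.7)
The plan is to reduce Theorem \ref{th:4.3} directly to the already-proved Theorem \ref{th:4.2} via the representation theorem for (RDP)$_1$ pseudo effect algebras.

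First, I would invoke \cite[Thm 5.7]{DvVe2} (recalled just after the list of Riesz Decomposition Properties in Section 2): every pseudo effect algebra $E$ with (RDP)$_1$ is an interval pseudo effect algebra, i.e.\ there is a unital po-group $(G,u)$, itself satisfying (RDP)$_1$, such that $E \cong \Gamma(G,u)$. Since (RDP)$_1 \Rightarrow$ (RDP), this $(G,u)$ also satisfies (RDP), which is the hypothesis required by Theorem \ref{th:4.2}.

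Next I would apply Theorem \ref{th:4.2} to $(G,u)$: its conclusion states that $\mathcal S(\Gamma(G,u))$ is either empty or a nonempty Choquet simplex. Since $E \cong \Gamma(G,u)$ as pseudo effect algebras, the state spaces $\mathcal S(E)$ and $\mathcal S(\Gamma(G,u))$ are affinely homeomorphic via pullback along the isomorphism; in particular, one is empty iff the other is, and the Choquet-simplex property (being affinely homeomorphic to a base of a lattice cone, together with compactness in the weak topology) transfers. This yields the theorem.

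I do not foresee a real obstacle: the only things to check are that the isomorphism $E \cong \Gamma(G,u)$ induces an affine homeomorphism of state spaces (routine, since states are defined purely via the partial operation $+$ and the unit $1$, both of which are preserved by the isomorphism, and weak convergence is pointwise), and that the $(G,u)$ produced by \cite[Thm 5.7]{DvVe2} does inherit (RDP)$_1$; the latter is precisely the content of the categorical equivalence between pseudo effect algebras with (RDP)$_1$ and unital po-groups with (RDP)$_1$ that was recalled in the introduction. Once these are noted, the proof consists of one line of reduction plus one application of Theorem \ref{th:4.2} and Proposition \ref{pr:4.1}.
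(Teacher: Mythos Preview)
Your proposal is correct and follows essentially the same route as the paper: invoke \cite[Thm 5.7]{DvVe2} to write $E\cong\Gamma(G,u)$ with $(G,u)$ a unital po-group satisfying (RDP)$_1$ (hence (RDP)), then apply Theorem \ref{th:4.2} and transfer along the isomorphism. The paper's proof is precisely this one-line reduction.
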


\begin{proof}  Let $E$ be a pseudo effect algebra with (RDP)$_1$ and let
$\mathcal S(E)\ne \emptyset.$  Due to the basic representation of
pseudo effect algebras with (RDP)$_1$, \cite[Thm 5.7]{DvVe2}, there
is a unique (up to isomorphism) unital po-group $(G,u)$ with
(RDP)$_1$ such that $E \cong \Gamma(G,u).$  Because $(G,u)$
satisfies also (RDP), by Theorem \ref{th:4.2}, $\mathcal
S(\Gamma(G,u))$ and $\mathcal S(E)$ are affinelly isomorphic Choquet
simplices.
\end{proof}

\begin{theorem}\label{th:4.4}  Let $E$ be a pseudo effect algebra
with {\rm (RDP)}$_2.$ Then $\mathcal S(E)$ is either an empty set or
it is a nonempty Bauer simplex.
\end{theorem}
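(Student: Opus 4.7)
The plan is to combine Theorem \ref{th:4.3} with the observation that a Choquet simplex is Bauer precisely when its extreme boundary is closed. Since (RDP)$_2$ implies (RDP)$_1$, Theorem \ref{th:4.3} already tells us that, whenever $\mathcal S(E)$ is nonempty, it is a Choquet simplex. So the whole task reduces to showing that $\partial_e\mathcal S(E)$ is closed (hence compact) in the weak topology.

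First I would pass from $E$ to a unital $\ell$-group. The paper has already recorded that every pseudo effect algebra with (RDP)$_2$ is lattice-ordered and, via its equivalence with pseudo MV-algebras and the $\Gamma$-functor, isomorphic to $\Gamma(G,u)$ for a (not necessarily Abelian) unital $\ell$-group $(G,u)$. Proposition \ref{pr:4.1} then provides an affine homeomorphism $\mathcal S(E)\cong \mathcal S(G,u)$ that restricts to a bijection between the respective extreme boundaries. Thus it suffices to prove that $\partial_e\mathcal S(G,u)$ is closed in the weak topology of $\mathcal S(G,u)$.

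Next I would use the characterization of extremal states quoted earlier from \cite[Prop 4.3, 4.6]{Dvu}: a state $s$ on the unital $\ell$-group $(G,u)$ is extremal iff $\Ker(s)=\{g\in G: s(|g|)=0\}$ is a maximal normal $\ell$-ideal, in which case the quotient $G/\Ker(s)$ is a linearly ordered unital group and $s$ factors through its unique state. Equivalently, extremal states are exactly the positive normalized group homomorphisms $s\colon G\to\mathbb R$ that are lattice homomorphisms, i.e. satisfy $s(x\vee y)=\max\{s(x),s(y)\}$ for every $x,y\in G$ (the nontrivial direction uses the singleton state space of a linearly ordered unital group, also cited above).

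With this algebraic description in hand, closedness is automatic: if $\{s_\alpha\}\subseteq\partial_e\mathcal S(G,u)$ converges weakly to $s$, then for every $x,y\in G$,
$$
s(x\vee y)=\lim_\alpha s_\alpha(x\vee y)=\lim_\alpha\max\{s_\alpha(x),s_\alpha(y)\}=\max\{s(x),s(y)\},
$$
so $s$ is itself a lattice homomorphism, and hence extremal. Thus $\partial_e\mathcal S(G,u)$ is a closed subset of the compact set $\mathcal S(G,u)$, so it is compact. Together with the Choquet simplex conclusion of Theorem \ref{th:4.3}, this makes $\mathcal S(E)$ a Bauer simplex. The only real obstacle is the lattice-homomorphism characterization of extremal states in the non-Abelian $\ell$-group setting, which is what forces the appeal to \cite{Dvu}; once that is available, closedness of $\partial_e\mathcal S(E)$, and hence the Bauer property, follow essentially by inspection.
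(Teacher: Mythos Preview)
Your proposal is correct and follows essentially the same strategy as the paper: reduce to Theorem \ref{th:4.3} for the Choquet property, then characterize extremal states as exactly the lattice-preserving states and observe that this condition is closed under weak convergence. The only cosmetic difference is that the paper applies the characterization directly at the level of the pseudo MV-algebra $E$ (citing \cite[Prop 4.7]{Dvu}, which says $s$ is extremal iff $s(a\wedge b)=\min\{s(a),s(b)\}$), whereas you pass to the representing unital $\ell$-group $(G,u)$ and phrase it as $s(x\vee y)=\max\{s(x),s(y)\}$ via the maximal-ideal description of \cite[Prop 4.3, 4.6]{Dvu}; these are the same fact seen through Proposition \ref{pr:4.1}.
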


\begin{proof} By \cite[Thm 5.7]{DvVe2}, there is a unique unital po-group
$(G,u)$ with {\rm (RDP)}$_1$ such that $E \cong \Gamma(G,u).$ By
\cite[Prop. 6.3]{DvVe2}, $(G,u)$ satisfies even {\rm (RDP)}$_2.$

Assume that $\mathcal S(E)\ne \emptyset.$ As it was already
mentioned  at the end of Section 2, $E$ can be converted into a
pseudo MV-algebra. Due to \cite[Prop 4.7]{Dvu}, a state $s$ on a
pseudo MV-algebra is extremal iff $s(a\wedge b)=\min\{s(a),s(b)\}$
for all $a,b \in E.$ Therefore, $\partial_e \mathcal S(E)$ is
compact, and by Theorem \ref{th:4.3}, $\mathcal S(E)$ is a compact
simplex. Hence, $\mathcal S(E)$ is a nonempty Choquet simplex.
\end{proof}

\section{Representation of States by Integrals}

In this main section of the paper, we show that if $s$ is a state on
a pseudo effect algebra with (RDP)$_1,$ then it can be represented
as an integral of a continuous affine function through some regular
Borel probability measure.  It will generalize analogous  results
from \cite{Dvu2}.

We start with some necessary definitions.

Let $K$ be a compact convex subset of a locally convex Hausdorff
space. A mapping $f:\ K \to \mathbb R$ is said to be {\it affine}
if, for all $x,y \in K$ and any $\lambda \in [0,1]$, we have
$f(\lambda x +(1-\lambda )y) = \lambda f(x) +(1-\lambda ) f(y)$. Let
$\Aff(K)$ be the set of all continuous affine functions on $K.$ Then
$\mbox{Aff}(K)$ is a unital po-group with the strong unit $1$ which
is a subgroup  of the po-group $\mbox{C}(K)$ of all continuous
real-valued functions on $K$ (we recall that, for $f,g \in
\mbox{C}(K),$ $f \le g$ iff $f(x)\le g(x)$ for any $x \in K$), hence
it is an Archimedean unital po-group with the strong unit $1$ that
is even an $\ell$-group.

For example, let $E$ be a pseudo effect algebra such that $\mathcal
S(E)\ne \emptyset.$ Given $a \in E,$ let $\hat a:\mathcal S(E) \to
[0,1]$ such that $\hat a(s):= s(a),$ $s \in \mathcal S(E).$  Then
$\hat a \in \Aff(\mathcal S(E)).$ In a similar way, if $\mathcal
S(G,u)\ne \emptyset,$  for any $g \in (G,u),$ the mapping $\hat g:
\mathcal S(G,u) \to \mathbb R$ defined by $\hat g(s):=s(g),$ $s \in
\mathcal S(G,u),$ is a continuous affine function on $\mathcal
S(G,u).$

Let $S = {\mathcal S}(\mbox{Aff}(K),1).$ Then the evaluation mapping
$\psi:\ K \to S$ defined by $\psi(x)(f)=f(x)$ for all $f \in
\mbox{Aff}(K)$ $(x \in K)$ is an affine homeomorphism of $K$ onto
$S,$ see \cite[Thm 7.1]{Goo}.

The po-group $\mbox{Aff}(K)$ is not necessarily neither with (RIP)
nor an $\ell$-group. By \cite[Thm 11.4]{Goo}, $\Aff(K)$ has (RIP)
iff $K$ is a Choquet simplex, and \cite[Thm 11.21]{Goo}, $\Aff(K)$
is an $\ell$-group iff $K$ is a Bauer simplex. Therefore,  due to
Theorems \ref{th:4.2}-\ref{th:4.4} we have the following result:

\begin{theorem}\label{th:5.1}
Let for a unital po-group $(G,u),$ the state space $\mathcal S(G,u)$
be non-void. If $(G,u)$ has {\rm (RDP)}, then $(\Aff(\mathcal
S(G,u)),1)$ is an Abelian unital po-group with {\rm (RIP)}.

If $(G,u)$ has {\rm (RDP)$_2$}, then $(\Aff(\mathcal S(G,u)),1)$ is
an Abelian unital po-group with {\rm (RDP)$_2$}.

Let $E$ be a pseudo effect algebra admitting at least one state. If
$E$ has {\rm (RDP)$_1$}, then $(\Aff(\mathcal S(E)),1)$ is an
Abelian  unital po-group with {\rm (RIP)}, if $E$ has {\rm
(RDP)$_2$}, then $(\Aff(\mathcal S(E)),1)$ is an Abelian unital
$\ell$-group.
\end{theorem}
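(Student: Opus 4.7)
The plan is to read Theorem~\ref{th:5.1} as a transparent consequence of the simplex theorems of Section~4 combined with two classical correspondences from \cite[Thm 11.4, Thm 11.21]{Goo}, namely that $\Aff(K)$ has (RIP) iff $K$ is a Choquet simplex, and that $\Aff(K)$ is an $\ell$-group iff $K$ is a Bauer simplex. Since $\Aff(K)$ consists of real-valued functions under pointwise addition, it is automatically Abelian; the constant function $1$ has already been recorded in the excerpt as a strong unit. Hence only the interpolation/lattice content of the conclusions requires work.

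For the first clause, under (RDP) on $(G,u)$ with $\mathcal S(G,u)\ne\emptyset$, Theorem~\ref{th:4.2} identifies $\mathcal S(G,u)$ as a Choquet simplex, and \cite[Thm 11.4]{Goo} converts this into (RIP) on $\Aff(\mathcal S(G,u))$. The third clause is identical in shape: (RDP)$_1$ on $E$ together with the existence of a state puts us in the hypotheses of Theorem~\ref{th:4.3}, again yielding a Choquet simplex, whence \cite[Thm 11.4]{Goo} delivers (RIP).

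For the fourth clause, (RDP)$_2$ on $E$ invokes Theorem~\ref{th:4.4}, making $\mathcal S(E)$ a Bauer simplex, and \cite[Thm 11.21]{Goo} promotes $\Aff(\mathcal S(E))$ to an Abelian unital $\ell$-group. For the second clause, we transfer to the pseudo effect algebra side: if $(G,u)$ has (RDP)$_2$, then $\Gamma(G,u)$ inherits (RDP)$_2$ as a pseudo effect algebra (any decomposition witnessed in $G^+$ automatically stays in the interval $[0,u]$ because each summand is dominated by an element of $[0,u]$). Proposition~\ref{pr:4.1} then makes $\mathcal S(G,u)$ affinely homeomorphic to $\mathcal S(\Gamma(G,u))$, which by Theorem~\ref{th:4.4} is a Bauer simplex; \cite[Thm 11.21]{Goo} turns $\Aff(\mathcal S(G,u))$ into an $\ell$-group, and for directed po-groups the $\ell$-group property is equivalent to (RDP)$_2$ (as recorded in Section~3), giving the claimed conclusion.

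I anticipate no substantive obstacle: the argument is a bookkeeping combination of Section~4 with Goodearl's theorems. The only point requiring a moment of care is the bridge from po-groups to their interval pseudo effect algebras in the (RDP)$_2$ case of the second clause, but this is routinely supplied by Proposition~\ref{pr:4.1}.
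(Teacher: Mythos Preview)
Your proposal is correct and follows essentially the same route as the paper: the paper presents Theorem~\ref{th:5.1} as an immediate consequence of the two Goodearl characterizations \cite[Thm 11.4, Thm 11.21]{Goo} combined with Theorems~\ref{th:4.2}--\ref{th:4.4}, and your write-up simply unpacks this in slightly more detail. Your handling of the second clause via $\Gamma(G,u)$ and Proposition~\ref{pr:4.1} is a reasonable way to spell out what the paper leaves implicit.
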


If  $K$ is a compact Hausdorff topological space, let ${\mathcal
B}(K)$ be the Borel $\sigma$-algebra of $K$ generated by all open
subsets of $K.$  Let  ${\mathcal M}_1^+(K)$ denote the set of  all
probability measures, that is, all positive regular
$\sigma$-additive Borel measures $\mu$ on $\mathcal B(K).$  We
recall that a Borel measure $\mu$ is called {\it regular} if

$$\inf\{\mu(O):\ Y \subseteq O,\ O\ \mbox{open}\}=\mu(Y)
=\sup\{\mu(C):\ C \subseteq Y,\ C\ \mbox{closed}\}
$$
for any $Y \in {\mathcal B}(K).$

For example, if $x \in K,$ then the Dirac measure $\delta_x$
concentrated at the point $x$ is a regular Borel probability measure
on $\mathcal B(K).$

For two measures $\mu$ and $\lambda$ we write
$$\mu \sim \lambda\quad  \mbox{iff}\quad
\int_K f \dx \mu=\int_K f \dx \lambda,\ f \in \mbox{Aff}(K).
$$

If  $\mu $ and $\lambda$ are nonnegative regular Borel measures on a
convex compact set $K,$  we introduce for them the {\it Choquet
ordering} defined by
$$
\mu \prec \lambda  \quad  \mbox{iff} \quad \int_K f \dx \mu\le\int_K
f \dx \lambda, \ f \in \mbox{Con}(K),
$$  where $\mbox{Con}(K)$ is the set of all continuous convex
functions $f$ on $K$ (that is $f(\alpha x_1+(1-\alpha) x_2)\le
\alpha f(x_1)+(1-\alpha)f(x_2)$ for $x_1,x_2\in K$ and $\alpha \in
[0,1]$). Then $\prec$ is a partial order on the cone of nonnegative
measures. The fact $\lambda \prec \mu$ and $\mu \prec \lambda$
implies $\lambda = \mu$ follows from the fact that
$\mbox{Con}(K)-\mbox{Con}(K)$ is dense in $\mbox{\rm C}(K).$

Moreover, for any probability measure (= regular Borel probability
measure) $\lambda$ there is a maximal probability measure $\mu$ such
that $\mu \succ \lambda,$ \cite[Lem 4.1]{Phe}.

The fact $\mu \succ \lambda$ means that $\lambda$ has its support
``closer" to the extreme points of $K$ than does $\lambda.$

The following results have been proved in \cite{Dvu2} for interval
effect algebras.  Here we generalize them  for pseudo effect
algebras with (RDP), (RDP)$_1,$ and (RDP)$_2,$ respectively.
However, the situation for pseudo effect algebras follows basic
ideas of the analogous proofs from \cite{Dvu2}, we present the
proofs here in  full generality because it was necessary to take
into account a non-commutative character of pseudo effect algebras
that was developed in the previous sections.

\begin{theorem}\label{th:7.2'}
Let $E=\Gamma(G,u)$ be a pseudo effect algebra such that it admits
at least one state, where $(G,u)$ is a unital po-group with {\rm
(RDP)} and let $s$ be a state on $E.$ Let $\psi: E \to \Aff(\mathcal
S(E))$ be defined by $\psi(a) := \hat a,$ $a\in E,$ where $\hat a$
is a mapping from $\mathcal S(E)$ into $[0,1]$ such that $\hat
a(s):=s(a),$ $s \in \mathcal S(E).$ Then there is a unique state
$\tilde s$ on the unital Abelian po-group $(\Aff(\mathcal S(E)),1)$
such that $\tilde s(\hat a) = s(a)$ for any $a \in E.$

The mapping $s \mapsto \tilde s$ defines an affine homeomorphism
from the state space $\mathcal S(E)$ onto $\mathcal
S(\Gamma(\Aff(\mathcal S(E)),1)).$

\end{theorem}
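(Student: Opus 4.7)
The plan is to reduce the whole statement to the affine homeomorphism $K \cong \mathcal S(\Aff(K),1)$ given by the evaluation map $\psi_K(x)(f)=f(x)$ (the result from \cite[Thm~7.1]{Goo} already recalled just before the theorem), applied to $K=\mathcal S(E)$. This is legitimate because, by Proposition \ref{pr:4.1} together with Theorem \ref{th:4.2}, $\mathcal S(E)$ is nonempty and is a (Choquet) compact convex simplex in a locally convex Hausdorff space, and by Theorem \ref{th:5.1} the po-group $(\Aff(\mathcal S(E)),1)$ is an Abelian unital po-group with (RIP). In particular each $\hat a=\psi(a)$ is continuous (continuity of states in the weak topology of $\mathcal S(E)$) and affine (directly from the definition of convex combinations of states), so $\hat a\in\Aff(\mathcal S(E))$ and $\psi:E\to\Aff(\mathcal S(E))$ makes sense.

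For \emph{existence}, given $s\in\mathcal S(E)$ I would define $\tilde s:=\psi_K(s)$, i.e.\ $\tilde s(f):=f(s)$ for $f\in\Aff(\mathcal S(E))$. Point evaluation is obviously a positive group homomorphism with $\tilde s(1)=1$, so $\tilde s\in\mathcal S(\Aff(\mathcal S(E)),1)$; moreover $\tilde s(\hat a)=\hat a(s)=s(a)$ for every $a\in E$. For \emph{uniqueness}, suppose $\tilde s'$ is any state on $(\Aff(\mathcal S(E)),1)$ with $\tilde s'(\hat a)=s(a)$ for all $a\in E$. By \cite[Thm~7.1]{Goo} there is a unique $t\in\mathcal S(E)$ with $\tilde s'=\psi_K(t)$; the defining equation then forces $t(a)=\tilde s'(\hat a)=s(a)$ for all $a\in E$, hence $t=s$ and $\tilde s'=\tilde s$.

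Finally, for the affine homeomorphism onto $\mathcal S(\Gamma(\Aff(\mathcal S(E)),1))$ I would use Proposition \ref{pr:4.1} applied to the \emph{Abelian} unital po-group $(\Aff(\mathcal S(E)),1)$: by Theorem \ref{th:5.1} it satisfies (RIP), and since it is Abelian (RIP)$\Leftrightarrow$(RDP), so Proposition \ref{pr:4.1} yields an affine homeomorphism between $\mathcal S(\Aff(\mathcal S(E)),1)$ and $\mathcal S(\Gamma(\Aff(\mathcal S(E)),1))$ via restriction of states to the unit interval. Composing this with the evaluation homeomorphism $\psi_K:\mathcal S(E)\to\mathcal S(\Aff(\mathcal S(E)),1)$ from \cite[Thm~7.1]{Goo} gives exactly the map $s\mapsto\tilde s$ (now viewed as a state on $\Gamma(\Aff(\mathcal S(E)),1)$) and shows it is an affine homeomorphism.

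There is no genuine obstacle here; the only point that requires care is the bookkeeping in the last paragraph, namely verifying that Proposition \ref{pr:4.1} may legitimately be invoked on $(\Aff(\mathcal S(E)),1)$—which is why it was useful to establish first, via Theorem \ref{th:5.1}, that this po-group has (RIP), and then to use Abelianness to upgrade (RIP) to (RDP).
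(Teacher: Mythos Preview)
Your proof is correct and follows essentially the same route as the paper: both arguments hinge on \cite[Thm~7.1]{Goo} (the evaluation map $K\to\mathcal S(\Aff(K),1)$ is an affine homeomorphism) applied to $K=\mathcal S(E)$. The paper first extends $\psi$ to a homomorphism $\hat\psi:G\to\Aff(\mathcal S(E))$ via Proposition~\ref{pr:4.1} and then invokes \cite[Prop~7.20]{Goo} for the existence and uniqueness of $\tilde s$, whereas you obtain both directly from point evaluation together with \cite[Thm~7.1]{Goo}; these are equivalent formulations of the same fact. Your final paragraph, using Theorem~\ref{th:5.1} and Abelianness to upgrade (RIP) to (RDP) so that Proposition~\ref{pr:4.1} applies to $(\Aff(\mathcal S(E)),1)$, is actually more explicit than the paper, which simply cites \cite[Thm~7.1]{Goo} for the homeomorphism without spelling out the passage from $\mathcal S(\Aff(\mathcal S(E)),1)$ to $\mathcal S(\Gamma(\Aff(\mathcal S(E)),1))$.
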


\begin{proof}
By Proposition \ref{pr:4.1}, the mapping $\psi$ can be uniquely
extended to a po-group homomorphism $\hat \psi:G \to \Aff(\mathcal
S(E))$ via $\hat \psi(g)(s):= s(g),$ $s \in \mathcal S(G,u).$ Let
$\hat s$ be a state on $(G,u)$ that is a unique extension of a state
$s.$  Now applying  the proof of  \cite[Prop 7.20]{Goo}, we can show
that \cite[Prop 7.20]{Goo} holds also for our group $G$ that is not
necessarily bounded. Therefore,  we have that there is a unique
state $\tilde s$  on $(\Aff(\mathcal S(E)),1)$ such that $\tilde
s(\hat a) = s(a),$ $a\in E.$


The affine homeomorphism $s \mapsto \tilde s$ follows from \cite[Thm
7.1]{Goo}.
\end{proof}

\begin{theorem}\label{th:7.3'}  Let $(G,u)$ be a unital po-group
with {\rm (RDP)} and let $s$ be a state on the pseudo effect algebra
$E=\Gamma(G,u).$ Then there is a unique maximal regular Borel
probability measure $\mu_s \sim \delta_s$ on $\mathcal B(\mathcal
S(E))$ such that

$$ s(a) = \int_{\mathcal S(E)} \hat a(x) \dx \mu_s(x),\quad a \in
E. \eqno(5.1)$$
\end{theorem}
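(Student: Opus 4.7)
The plan is to combine the Choquet simplex structure of $\mathcal S(E)$ established in Theorem \ref{th:4.2} with the standard Choquet--Bishop--de Leeuw theory of representing measures. Because $(G,u)$ has (RDP), Theorem \ref{th:4.2} gives that $\mathcal S(E)$ is a nonempty Choquet simplex, and this is the structural input that will drive the proof.

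First I would observe that the Dirac measure $\delta_s$ already provides a trivial integral representation of $s$: for any $a \in E$ one has $\int_{\mathcal S(E)} \hat a(x) \dx \delta_s(x) = \hat a(s) = s(a)$. Hence the nontrivial content of the theorem is the \emph{existence} and \emph{uniqueness} of a \emph{maximal} representing measure $\mu_s$ in the equivalence class $\mu_s \sim \delta_s$; the integral formula (5.1) will then be automatic.

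For existence, I would invoke \cite[Lem 4.1]{Phe} (explicitly cited in the excerpt) to obtain a maximal regular Borel probability measure $\mu_s$ on $\mathcal B(\mathcal S(E))$ with $\mu_s \succ \delta_s$ in the Choquet order. Every $f \in \Aff(\mathcal S(E))$ belongs both to $\mbox{Con}(\mathcal S(E))$ and to $-\mbox{Con}(\mathcal S(E))$, so $\mu_s \succ \delta_s$ forces $\int f \dx \mu_s = \int f \dx \delta_s$ for all continuous affine $f$, i.e., $\mu_s \sim \delta_s$. Specializing to $f = \hat a$ gives (5.1).

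For uniqueness, I would appeal to the Choquet uniqueness theorem: a compact convex subset of a locally convex Hausdorff space is a simplex if and only if every point is the barycenter of a unique maximal probability measure. Since $\mathcal S(E)$ is a Choquet simplex by Theorem \ref{th:4.2}, and since any probability measure $\mu' \sim \delta_s$ automatically has barycenter $s$ (as affine functions separate points of $\mathcal S(E)$, this follows by taking $f$ to range over $\Aff(\mathcal S(E))$ in the definition of $\sim$), the maximal $\mu_s$ with $\mu_s \sim \delta_s$ is unique. The main obstacle is merely locating and citing the uniqueness part of Choquet's theorem in a form that does not presuppose metrizability of $K$; once that reference is pinned down, the rest of the proof is essentially immediate from the simplex property supplied by Section 4 and from the Choquet--Phelps machinery on maximal measures.
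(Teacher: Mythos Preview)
Your proposal is correct and follows essentially the same route as the paper: both invoke Theorem \ref{th:4.2} to obtain that $\mathcal S(E)$ is a Choquet simplex and then apply the Choquet--Meyer uniqueness theorem for maximal representing measures on simplices. The paper additionally cites Theorem \ref{th:7.2'} (the extension of $s$ to a state on $\Aff(\mathcal S(E))$) before invoking Choquet--Meyer, but this step is not really used in the argument and your direct appeal to $\delta_s$ and \cite[Lem 4.1]{Phe} makes the logic more transparent.
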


\begin{proof}
Due to Theorem \ref{th:4.2}, $\mathcal S(E)$ is a Choquet simplex.
By Theorem \ref{th:7.2'}, there is a unique state $\tilde s$ on
$(\Aff(\mathcal S(E)),1)$ such that $\tilde s(\hat a) = s(a),$ $a
\in A.$

Applying the Choquet--Meyer Theorem, \cite[Thm p. 66]{Phe},  we have

$$f(s)=\int_{\mathcal S(E)} f(x) \dx\mu_s, \quad f \in \Aff(\mathcal
S(E)).
$$
Since $\hat a \in \Aff(\mathcal S(E))$ for any $a\in E,$ we have the
representation given by (5.1).
\end{proof}

\begin{theorem}\label{th:7.3'''} Let $E$ be a pseudo effect algebra
satisfying {\rm (RDP$_1$)} and let $s$ be a state on $E.$ Then there
is a unique maximal regular Borel probability measure $\mu_s \sim
\delta_s$ on $\mathcal B(\mathcal S(E))$ such that

$$ s(a) = \int_{\mathcal S(E)} \hat a(x) \dx \mu_s(x),\quad a \in
E.$$
\end{theorem}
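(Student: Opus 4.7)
The plan is to reduce Theorem \ref{th:7.3'''} directly to the already-established Theorem \ref{th:7.3'} by invoking the representation theorem for pseudo effect algebras with (RDP)$_1$. Since $E$ satisfies (RDP)$_1$, by \cite[Thm 5.7]{DvVe2} there is a unital po-group $(G,u)$, unique up to isomorphism, such that $(G,u)$ also satisfies (RDP)$_1$ and $E \cong \Gamma(G,u)$. Because (RDP)$_1$ implies (RDP), the hypothesis of Theorem \ref{th:7.3'} is satisfied for $(G,u)$.

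Next, I would use the isomorphism $E \cong \Gamma(G,u)$ to identify $\mathcal S(E)$ with $\mathcal S(\Gamma(G,u))$ as compact convex sets (this identification is an affine homeomorphism, and under it $\hat a$ is identified with the same evaluation function). By Theorem \ref{th:4.3}, $\mathcal S(E)$ is a Choquet simplex — which is the crucial feature needed for the Choquet--Meyer machinery invoked in the proof of Theorem \ref{th:7.3'} to give uniqueness of the maximal representing measure.

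Finally, applying Theorem \ref{th:7.3'} directly to $\Gamma(G,u)$ and to the state $s$ (viewed on $\Gamma(G,u)$ via the isomorphism), I obtain the unique maximal regular Borel probability measure $\mu_s \sim \delta_s$ on $\mathcal B(\mathcal S(\Gamma(G,u))) = \mathcal B(\mathcal S(E))$ satisfying
\[
s(a) = \int_{\mathcal S(E)} \hat a(x) \dx \mu_s(x), \qquad a \in E,
\]
as required.

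There is essentially no obstacle here: the substantive work has already been completed in the earlier sections, namely (i) the structural representation \cite[Thm 5.7]{DvVe2} that converts (RDP)$_1$ pseudo effect algebras into intervals in (RDP)$_1$ po-groups, (ii) Theorem \ref{th:4.3} identifying $\mathcal S(E)$ as a Choquet simplex, and (iii) the combination of Theorem \ref{th:7.2'} with the Choquet--Meyer Theorem used in Theorem \ref{th:7.3'}. The only point requiring a brief check is that extending $s$ to a state on $(G,u)$ (via Proposition \ref{pr:4.1}) commutes with the extension to $\tilde s$ on $(\Aff(\mathcal S(E)),1)$ of Theorem \ref{th:7.2'}, so that the measure produced by Choquet--Meyer really represents the original state $s$ on $E$ when integrated against the functions $\hat a$ with $a \in E$; this is immediate since both extensions are canonical and the $\hat a$ are in $\Aff(\mathcal S(E))$.
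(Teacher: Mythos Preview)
Your proposal is correct and follows essentially the same approach as the paper: invoke the representation theorem \cite[Thm 5.7]{DvVe2} to write $E\cong\Gamma(G,u)$ for a unital po-group $(G,u)$ with (RDP)$_1$ (hence (RDP)), and then apply Theorem~\ref{th:7.3'} directly. The additional remarks you make about identifying the state spaces and the compatibility of the extensions are more explicit than what the paper writes, but they are exactly the routine verifications underlying the paper's one-line reduction.
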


\begin{proof}  By \cite[Thm 5.7]{Goo}, there is a unital po-group
$(G,u)$ with (RDP)$_1$ such that $E \cong \Gamma(G,u).$  The desired
result follows now from Theorem \ref{th:7.3'}.
\end{proof}

\begin{theorem}\label{th:7.5'}  Let $E$ be a pseudo effect algebra with
{\rm (RDP)$_2$} and let $s$ be a state on $E.$ Then there is a
unique regular Borel probability measure, $\mu_s,$ on $\mathcal
B(\mathcal S(E))$ such that $\mu_s(\partial_e \mathcal S(E))=1$ and

$$ s(a) = \int_{\partial_e \mathcal S(E)} \hat a(x) \dx \mu_s(x),\quad a \in
E. \eqno(5.2)$$
\end{theorem}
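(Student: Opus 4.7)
The plan is to reduce to Theorem~\ref{th:7.3'''} and then exploit the extra structure that (RDP)$_2$ forces on $\mathcal{S}(E)$, namely that it is a Bauer simplex. Since (RDP)$_2$ implies (RDP)$_1$, Theorem~\ref{th:7.3'''} already delivers a unique maximal regular Borel probability measure $\mu_s\sim\delta_s$ on $\mathcal{B}(\mathcal{S}(E))$ satisfying $s(a)=\int_{\mathcal{S}(E)}\hat a(x)\dx\mu_s(x)$ for all $a\in E$. The task is therefore to upgrade this measure from ``maximal'' to ``supported on $\partial_e\mathcal{S}(E)$'' and to promote the integral from $\mathcal{S}(E)$ to $\partial_e\mathcal{S}(E)$.

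The key input is Theorem~\ref{th:4.4}, which tells us that $\mathcal{S}(E)$ is a Bauer simplex; in particular $\partial_e\mathcal{S}(E)$ is a compact, hence closed, subset of $\mathcal{S}(E)$, so it lies in $\mathcal{B}(\mathcal{S}(E))$ and the assertion $\mu_s(\partial_e\mathcal{S}(E))=1$ is at least meaningful. Next I invoke the Bauer characterization of simplices (see \cite[Thm p.~66]{Phe} together with the standard fact that for a Bauer simplex the maximal measures in the Choquet ordering are precisely the probability measures carried by $\partial_e K$): since $\mu_s$ is maximal and $\partial_e\mathcal{S}(E)$ is closed, $\mu_s$ vanishes on the open complement $\mathcal{S}(E)\setminus\partial_e\mathcal{S}(E)$, so $\mu_s(\partial_e\mathcal{S}(E))=1$. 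Once this is in place, the integral in Theorem~\ref{th:7.3'''} collapses to an integral over $\partial_e\mathcal{S}(E)$, giving (5.2).

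For uniqueness, suppose $\nu$ is any regular Borel probability measure on $\mathcal{B}(\mathcal{S}(E))$ with $\nu(\partial_e\mathcal{S}(E))=1$ and $s(a)=\int_{\partial_e\mathcal{S}(E)}\hat a(x)\dx\nu(x)$ for every $a\in E$. Extending via the affine isomorphism $a\mapsto\hat a$ of Theorem~\ref{th:7.2'} (which identifies $E$ with a generating interval in $\Aff(\mathcal{S}(E))$), linearity and uniform density arguments give $\tilde s(f)=\int_{\mathcal{S}(E)} f\dx\nu$ for every $f\in\Aff(\mathcal{S}(E))$, i.e.\ $\nu\sim\delta_s$. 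Because $\nu$ is concentrated on the closed set $\partial_e\mathcal{S}(E)$, it is maximal in the Choquet ordering on $\mathcal{S}(E)$ (again using the Bauer characterization). The Choquet--Meyer uniqueness theorem applied to the Choquet simplex $\mathcal{S}(E)$ forces $\nu=\mu_s$.

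The main obstacle, conceptually, is the passage from ``maximal representing measure on the whole simplex'' to ``measure supported on the extreme boundary,'' which is precisely where the Bauer hypothesis (equivalently, (RDP)$_2$ rather than mere (RDP)$_1$) is used; concretely one must cite a clean form of the Bauer characterization to know that on a Bauer simplex the maximal measure representing any point lives on the (closed) extreme boundary and is the unique boundary measure representing that point. The rest is bookkeeping on top of Theorems~\ref{th:4.4}, \ref{th:7.2'}, and \ref{th:7.3'''}.
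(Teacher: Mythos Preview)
Your proof is correct and follows essentially the same route as the paper: obtain the maximal representing measure from the (RDP)$_1$ integral theorem, then use that $\mathcal S(E)$ is a Bauer simplex (Theorem~\ref{th:4.4}) to conclude that this maximal measure is the unique boundary measure representing $s$. The only cosmetic differences are that the paper cites Theorem~\ref{th:7.3'} and packages both the support and uniqueness assertions into a single appeal to \cite[Thm II.4.1]{Alf}, whereas you go through Theorem~\ref{th:7.3'''} and spell out the uniqueness argument by hand via Theorem~\ref{th:7.2'} and Choquet--Meyer.
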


\begin{proof}  Due to Theorem \ref{th:7.3'}, we have a unique regular
Borel probability measure $\mu_s\sim \delta_s$ such that (5.1)
holds. The characterization of Bauer simplices, \cite[Thm
II.4.1]{Alf}, says that then $\mu_s$ is a unique regular Borel
probability measure $\mu_s$ on such that (5.1) holds and
$\mu_s(\partial_e \mathcal S(E)) = 1.$ Hence, (5.2) holds.
\end{proof}

\begin{corollary}\label{co:5.6} Let $s$ be a state on a pseudo
MV-algebra $E.$  Then there is a unique regular Borel probability
measure, $\mu_s,$ on $\mathcal B(\mathcal S(E))$ such that
$\mu_s(\partial_e \mathcal S(E))=1$ and

$$ s(a) = \int_{\partial_e \mathcal S(E)} \hat a(x) \dx \mu_s(x),\quad a \in
E.$$
\end{corollary}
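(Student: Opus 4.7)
The plan is to reduce the statement directly to Theorem \ref{th:7.5'} via the correspondence between pseudo MV-algebras and pseudo effect algebras with (RDP)$_2$ that is recalled at the end of Section 2. Recall that given the pseudo MV-algebra $E = (E;\oplus,^-,^\sim,0,1)$, one defines a partial operation $+$ on $E$ by declaring $a+b$ to exist iff $a \le b^-$, in which case $a+b := a\oplus b$. The resulting structure $(E;+,0,1)$ is a pseudo effect algebra satisfying (RDP)$_2$, and this passage is a bijective correspondence (cf.\ \cite[Thm 8.8]{DvVe2}).

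First I would check that states in the two senses agree. A state on $E$ as a pseudo MV-algebra is a map $s:E\to[0,1]$ with $s(1)=1$ that is additive whenever $a\oplus b$ corresponds to a defined partial sum (i.e.\ $a\le b^-$). Since the partial operation $+$ of the associated pseudo effect algebra is precisely $\oplus$ restricted to such pairs, a state on the pseudo MV-algebra $E$ is the same thing as a state on $E$ viewed as a pseudo effect algebra with (RDP)$_2$. Consequently the state space $\mathcal S(E)$ is identical in both settings, and for each $a\in E$ the evaluation map $\hat a:\mathcal S(E)\to[0,1]$, $\hat a(s):=s(a)$, is the same function.

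Having identified the two notions, the corollary becomes a direct application of Theorem \ref{th:7.5'} to $E$ viewed as a pseudo effect algebra with (RDP)$_2$: there exists a unique regular Borel probability measure $\mu_s$ on $\mathcal B(\mathcal S(E))$ with $\mu_s(\partial_e \mathcal S(E))=1$ such that
\[
s(a) = \int_{\partial_e \mathcal S(E)} \hat a(x)\dx \mu_s(x), \qquad a\in E.
\]

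There is no real obstacle here; the only thing that needs a sentence of care is the remark that states in the pseudo MV sense and states in the pseudo effect sense coincide, so that Theorem \ref{th:7.5'} may be invoked verbatim. The heavy lifting (existence of the maximal representing measure on a Choquet simplex, and the fact that $\mathcal S(E)$ is a Bauer simplex when (RDP)$_2$ holds, so that the maximal measure is supported on $\partial_e \mathcal S(E)$) has already been carried out in Theorems \ref{th:4.4} and \ref{th:7.5'}.
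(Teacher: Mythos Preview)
Your proposal is correct and follows essentially the same route as the paper: convert the pseudo MV-algebra into a pseudo effect algebra with (RDP)$_2$ via \cite[Thm 8.8]{DvVe2}, observe that the notions of state coincide under this correspondence, and then invoke Theorem \ref{th:7.5'}. The paper's proof is just a terser version of what you wrote.
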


\begin{proof}  From \cite[Thm 8.8]{DvVe2}, we have that the pseudo MV-algebra
$E$ can be converted into a pseudo effect algebra with (RDP)$_2.$
Since the notions of states on pseudo MV-algebras and on pseudo
effect algebras coincide, the corollary follows from Theorem
\ref{th:7.5'}.
\end{proof}

Corollary \ref{co:5.6} generalizes the analogous statements proved
in \cite{Kro, Pan} for MV-algebras.

We endow the set of regular Borel probability measures with the
weak$^*$  topology, i.e., a net $\{\mu_\alpha\}$ converges to an
element $\mu $ iff $\int_K f\dx \mu_\alpha \to \int_K f\dx\mu$ for
all $f \in \mbox{\rm C}(K)$.

Any convex subset $F$ of a convex set $K$ is a {\it face} if
$x=\lambda x_1+(1-\lambda)x_2 \in F,$ $0<\lambda<1,$ entail
$x_1,x_2\in F.$

\begin{corollary}\label{co:7.6'} Let $E$ be a  pseudo effect algebra with
{\rm (RDP)$_2$} and let $\mathcal S(E)$ be nonempty. Let $F$ be the
set of regular Borel probability measures $\mu \in \mathcal
M_1^+(\mathcal S(E))$   such that $\mu(\partial_e \mathcal S(E))=1$
is a closed face. The mapping $s \mapsto \mu_s,$ where $\mu_s$ is a
unique regular Borel probability measure satisfying {\rm (5.2)} and
$\mu_s(\partial_e \mathcal S(E))=1$, is an affine homeomorphism
between $\mathcal S(E)$ and $F$ that is endowed with the weak$^*$
topology. A state $s$ on $E$ is extremal if and only if $\mu_s$ in
{\rm (5.2)} is extremal. In such a case, $\mu_s = \delta_s.$
\end{corollary}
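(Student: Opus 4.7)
The plan starts from the fact that (RDP)$_2$ forces $\mathcal S(E)$ to be a Bauer simplex (Theorem \ref{th:4.4}), so $\partial_e \mathcal S(E)$ is compact, hence closed, in $\mathcal S(E)$. Throughout I write $K := \mathcal S(E)$ and $\Phi(s) := \mu_s$, where $\mu_s$ is the unique measure supplied by Theorem \ref{th:7.5'}. The key fact Theorem \ref{th:7.5'} gives is the characterization: $\mu_s$ is the \emph{only} element of $F$ satisfying $s(a) = \int_K \hat a\, \dx \mu$ for every $a \in E$.

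First I would show that $F$ is a closed face. Closedness in the weak$^*$ topology follows from the fact that $\partial_e K$ is closed: a standard portmanteau argument (or, equivalently, writing $\mu(\partial_e K) = 1$ as $\int g\,\dx\mu = 1$ for a net of continuous $[0,1]$-valued functions descending to $\chi_{\partial_e K}$) gives that $\{\mu : \mu(\partial_e K)=1\}$ is weak$^*$-closed in $\mathcal M_1^+(K)$. For the face property, if $\mu = \lambda\mu_1+(1-\lambda)\mu_2$ with $\mu\in F$ and $\lambda\in(0,1)$, then $\lambda\mu_1(K\setminus\partial_e K) \le \mu(K\setminus\partial_e K) = 0$, forcing $\mu_1,\mu_2\in F$.

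Next I would verify that $\Phi$ is an affine bijection. Affineness: if $s = \lambda s_1+(1-\lambda)s_2$, then $\nu := \lambda\mu_{s_1}+(1-\lambda)\mu_{s_2}$ lies in $F$ (as $F$ is convex) and satisfies $\int\hat a\,\dx\nu = s(a)$, so uniqueness in Theorem \ref{th:7.5'} gives $\mu_s = \nu$. Injectivity: $\mu_s$ determines $s(a)$ for all $a\in E$ via (5.2). Surjectivity: given $\mu\in F$, the formula $s(a):= \int\hat a\,\dx\mu$ defines a state on $E$ (additivity follows from $\widehat{a+b}=\hat a+\hat b$ wherever $a+b$ is defined and $\hat 1=1$), and this $\mu$ must equal $\mu_s$ by uniqueness. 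For continuity, I use that $\mathcal M_1^+(K)$ is weak$^*$-compact: if $s_\alpha\to s$ in $K$, pass to a subnet along which $\mu_{s_\alpha}\to \mu'$; since each $\hat a\in\Aff(K)\subset C(K)$, weak$^*$-convergence gives $\int\hat a\,\dx\mu' = \lim s_\alpha(a) = s(a)$, and since $F$ is closed $\mu'\in F$, hence $\mu'=\mu_s$ by uniqueness. A standard subnet argument then yields $\mu_{s_\alpha}\to\mu_s$. The inverse $\mu\mapsto s$ is weak$^*$ continuous by definition of weak$^*$ convergence applied to each $\hat a\in C(K)$; since $K$ is compact Hausdorff, the continuous bijection $\Phi$ is an affine homeomorphism onto $F$.

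Finally I would treat extremality. If $s$ is extremal in $K$, then (since $K$ is Bauer, a fortiori Choquet) $s\in\partial_e K$, so $\delta_s\in F$ and clearly $\int\hat a\,\dx\delta_s = \hat a(s)=s(a)$; the uniqueness clause of Theorem \ref{th:7.5'} forces $\mu_s=\delta_s$. Dirac measures are extreme in $\mathcal M_1^+(K)$, hence extreme in the face $F$. Conversely, if $\mu_s$ is extremal in $F$ and $s=\lambda s_1+(1-\lambda)s_2$ with $\lambda\in(0,1)$, then by affinity $\mu_s = \lambda\mu_{s_1}+(1-\lambda)\mu_{s_2}$, so $\mu_{s_1}=\mu_{s_2}=\mu_s$, and injectivity of $\Phi$ gives $s_1=s_2=s$. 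The main obstacle I anticipate is the delicate handling of the closedness of $F$ and the weak$^*$ continuity of $\Phi$; everything else follows from the already-established uniqueness in Theorem \ref{th:7.5'} and the Bauer structure.
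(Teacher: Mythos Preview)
Your proof is correct and follows essentially the same strategy as the paper's: both use the Bauer simplex structure (Theorem \ref{th:4.4}) to get $\partial_e\mathcal S(E)$ closed, and both hinge on the uniqueness clause of Theorem \ref{th:7.5'} for affinity, injectivity, and surjectivity of $s\mapsto\mu_s$. The only difference is presentational: where the paper cites \cite[Thm II.4.1(iii)]{Alf} for continuity and \cite[Prop 5.25]{Goo} for closedness of $F$, you supply direct portmanteau and subnet arguments, and your treatment of the extremality equivalence via the affine bijection is slightly more explicit than the paper's.
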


\begin{proof}
Due to Theorem \ref{th:7.5'} and (5.2),  the mapping $s\mapsto
\mu_s$ is affine and injective. If $\mu$ is a regular Borel
probability measure with $\mu(\partial_e \mathcal S(E))=1,$ then
$\mu $ defines via (5.2) some state, $s,$ on $E.$ Hence, the mapping
is surjective. The continuity follows from \cite[Thm
II.4.1(iii)]{Alf}.

It is clear that $F$ is a face. Since the set $\partial_e \mathcal
S(E)$ is closed, due to \cite[Prop 5.25]{Goo}, $F$ is closed.

At any rate, every Dirac measure $\delta_s$ also with
$\delta_s(\partial_e \mathcal S(E))=1$ is always a regular Borel
probability measure. Equality (5.2) entails that $s$ has to be
extremal. Conversely, if $s$ is extremal, the uniqueness of $\mu_s$
yields that $\mu_s=\delta_s.$
\end{proof}

It is worthy to remark a note concerning formula (5.2) that if $\mu$
is any regular Borel measure, the formula (5.1) defines a state, say
$s_\mu,$ on $E.$ But if $\mu(\partial_e \mathcal S(E))<1,$ then for
$s_\mu$ there is another unique regular Borel probability measure
$\mu_0$ such that $\mu_0(\partial_e \mathcal S(E))=1$ and it
represents $s_\mu$ via (5.2).

\begin{corollary}\label{co:5.8}  Let $(G,u)$ be a unital po-group
satisfying {\rm (RDP)} and let $s$ be a state on it. Then there is a
unique maximal regular Borel probability measure $\mu_s \sim
\delta_s$ on $\mathcal B(\mathcal S(G,u))$ such that

$$ s(g) = \int_{\mathcal S(G,u)} \hat g(x) \dx \mu_s(x),\quad g \in
G. \eqno(5.3)$$

If, in addition, $(G,u)$ satisfies {\rm (RDP)$_2$},  there is a
unique regular Borel probability measure, $\mu_s,$ on $\mathcal
B(\mathcal S(G,u))$ such that $\mu_s(\partial_e \mathcal S(G,u))=1$
and
$$ s(g) = \int_{\partial_e \mathcal S(G,u)} \hat g(x) \dx \mu_s(x),\quad g \in
G.$$
\end{corollary}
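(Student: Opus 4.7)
The plan is to reduce Corollary 5.8 to the already-established Theorems \ref{th:7.3'} and \ref{th:7.5'} by transporting the measure along the affine homeomorphism $\Phi: \mathcal S(G,u) \to \mathcal S(E)$, $\Phi(t) := t|_E$, provided by Proposition \ref{pr:4.1}, where $E := \Gamma(G,u)$. Write $s_E := s|_E \in \mathcal S(E)$. Under (RDP), Theorem \ref{th:7.3'} yields a unique maximal regular Borel probability measure $\nu \sim \delta_{s_E}$ on $\mathcal B(\mathcal S(E))$ with $s_E(a) = \int_{\mathcal S(E)} \hat a \dx \nu$ for every $a \in E$. Define $\mu_s(B) := \nu(\Phi(B))$ on $\mathcal B(\mathcal S(G,u))$; since $\Phi$ is a homeomorphism, $\mu_s$ is a regular Borel probability measure, and the change-of-variables formula together with the identity $\hat a(t) = \hat a(\Phi(t))$ (valid for $a \in E$, since both sides equal $t(a)$) gives $s(a) = \int \hat a \dx \mu_s$ for all $a \in E$.

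To promote the integral formula from $E$ to the whole group $G$, I would exploit additivity of both sides. The map $g \mapsto \hat g$ is a positive group homomorphism $G \to \Aff(\mathcal S(G,u))$, so $\widehat{g+h} = \hat g + \hat h$, and $s$ is additive while integration against $\mu_s$ is linear. Since $u$ is a strong unit in the directed po-group $G$ and (RDP) holds, every $g \in G^+$ satisfies $g \le nu$ for some $n$, and iterated (RDP) then decomposes $g$ as a finite sum $a_1 + \cdots + a_n$ with each $a_i \in E = [0,u]$; this is the same argument used inside the proof of Proposition \ref{pr:4.1}. Consequently the formula extends from $E$ to $G^+$ by addition, and from $G^+$ to $G$ via $g = g_1 - g_2$ with $g_1, g_2 \in G^+$, which exists because $G$ is directed. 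Uniqueness and maximality of $\mu_s$ transfer from $\nu$ because pullback along $\Phi$ is an affine bijection on regular Borel probability measures that preserves the Choquet order $\prec$: the map $f \mapsto f \circ \Phi$ is an isometric isomorphism $\mbox{C}(\mathcal S(E)) \to \mbox{C}(\mathcal S(G,u))$ sending continuous affine (resp.\ convex) functions to continuous affine (resp.\ convex) functions.

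For the (RDP)$_2$ half, I would repeat the argument with Theorem \ref{th:7.5'} replacing Theorem \ref{th:7.3'}. Since affine homeomorphisms carry extreme points to extreme points, $\Phi$ restricts to a homeomorphism $\partial_e \mathcal S(G,u) \to \partial_e \mathcal S(E)$, so $\nu(\partial_e \mathcal S(E)) = 1$ pulls back to $\mu_s(\partial_e \mathcal S(G,u)) = 1$, and (5.2) on $E$ transfers to the desired formula on $G$. The main obstacle is the single combinatorial step of decomposing an arbitrary $g \in G^+$ into a finite sum of elements of the interval $E$ in the possibly non-commutative setting; this is where (RDP) together with the strong-unit property is indispensable, since neither is automatic from directedness alone. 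Once that step is in hand, everything else is transport of structure along $\Phi$ combined with linearity of the integral.
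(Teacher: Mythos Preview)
Your proposal is correct and follows essentially the same route as the paper's proof: both transport the measure furnished by Theorem~\ref{th:7.3'} (resp.\ Theorem~\ref{th:7.5'}) along the affine homeomorphism $\theta=\Phi:\mathcal S(G,u)\to\mathcal S(E)$ of Proposition~\ref{pr:4.1}, decompose $g\in G^+$ as $a_1+\cdots+a_n$ with $a_i\in E$, and invoke additivity to pass from $E$ to $G$. Your write-up is in fact a bit more explicit than the paper's in justifying why maximality and the Choquet order transfer under $\Phi$, and why $\Phi$ matches $\partial_e\mathcal S(G,u)$ with $\partial_e\mathcal S(E)$ in the (RDP)$_2$ case.
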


\begin{proof} Due to Proposition \ref{pr:4.1},  the state spaces of
$E=\Gamma(G,u)$ and of $\Gamma(G,u)$ are affinely homeomorphic and
every state on $E$ can be uniquely extended to a state on $(G,u).$
The statements follow easily from Theorem \ref{th:7.3'}, (5.1) and
from Theorem \ref{th:7.5'} and (5.2), respectively. Indeed,  let
$s\in \mathcal S(G,u).$  We define a mapping $\theta: \mathcal
S(G,u)\to \mathcal S(E)$ defined by $\theta(s):= s|E.$  By
Proposition \ref{pr:4.1}, $\theta$ is an affine homeomorphism.

It is enough to assume $g \in G^+.$ Then $g = a_1+\cdots+a_n$ with
$a_1,\ldots,a_n \in E.$ Then $s(g)=s(a_1)+\cdots+s(a_n)=
\theta(s)(a_1)+\cdots+\theta(s)(a_n)= \hat a_1(\theta(s))+\cdots +
\hat a_n(\theta(s))= \hat g(s).$ Let $\nu_s$ be a unique regular
Borel measure defined on $\mathcal B(\mathcal S(E))$ such that
$\nu_s \sim \delta_{\theta(s)}$ and (5.1) holds.  If we set $\mu_s:=
\nu_s\circ \theta$, then $\mu_s$ is a unique regular measure on
$\mathcal S(G,u)$ such that $\mu_s \sim \delta_s.$  Then (5.1) gives

$$ s(g) = \sum_{i=1}^n \int_{\mathcal S(E)} \hat a_i(y)\dx
\nu_s(y)= \int_{\theta^{-1}(S(E))} \hat g(\theta(x))\dx
\nu_s(\theta(x)) =\int_{\mathcal S(G,u)} \hat g(x)\dx \mu_s(x).$$

\end{proof}

\section{Conclusion}

The states on pseudo effect algebras are analogous of probabilities
appearing in quantum measurements. In many situations, an effect
algebra or a pseudo effect algebra is an interval in a unital
po-group. Their state spaces are convex compact Hausdorff spaces
that are sometimes empty. If the state space is nonempty, then some
kind of the Riesz Decomposition Property allows us to show that the
state space is a Choquet simplex, Theorem \ref{th:4.2}, or even a
Bauer simplex, Theorem \ref{th:4.4}.

However, the state space of the crucial example of the Hilbert space
quantum mechanics, $\mathcal E(H),$ is not a simplex, the state
spaces of commutative C$^*$-algebra are simplices.

In Section 5, we have showed that if a state of a pseudo effect
algebra is with (RDP)$_1$ or with (RDP)$_2,$ then it can be
expressed as an integral of some continuous affine function through
a regular Borel probability measure, formulas (5.1) and (5.2), even
with some kind of uniqueness.

Formulas (5.1) and (5.2) are interesting also in other point of
view: According to de Finetti, a probability measure  is only  a
finitely additive measure, and by Kolmogorov \cite{Kol}, a
probability measure is assumed to be $\sigma$-additive. The
mentioned formulas show that there is a natural coexistence between
both approaches.


\begin{thebibliography}{DvVe2}

\bibitem[Alf]{Alf} E.M. Alfsen, {\it ``Compact Convex Sets and
Boundary Integrals",} Springer-Verlag, Berlin, 1971.

\bibitem[AlSc]{AlSc} E.M. Alfsen, F.W. Schultz,
{\it ``State Spaces of Operator Algebras",} Birkh\"auser,
Boston-Basel-Berlin, 2001.

\bibitem[BeFo]{BeFo}   M.K. Bennett,   D.J. Foulis,
{\it Phi-symmetric effect algebras}, Found. Phys. {\bf 25} (1995),
1699--1722.


\bibitem[BrRo]{BrRo} O. Bratteli, D.W. Robinson, {\it "Operator
Algebras and Quantum Statistical Mechanics,"} Springer-Verlag, New
York, Heidelberg, Berlin, 1979.

\bibitem[Dvu]{Dvu} A. Dvure\v censkij,    {\it States  on pseudo MV-algebras,  }
Studia Logica {\bf 68}  (2001), 301--327.

\bibitem[Dvu1]{Dvu1}
A. Dvure\v censkij, {\it Pseudo MV-algebras are intervals in
$\ell$-groups,} {\rm J. Austral. Math. Soc.}, {\bf 72} (2002),
427--445.

\bibitem[Dvu2]{Dvu2} A. Dvure\v censkij, {\it Every  state on interval effect algebra
is  integral,} J. Math. Phys. {\bf 51} (2010), to appear.

\bibitem[DvPu]{DvPu}
A. Dvure\v censkij, S. Pulmannov\'a, {\it ``New Trends in Quantum
Structures"}, Kluwer Acad. Publ., Dordrecht, Ister Science,
Bratislava, 2000.

\bibitem[DvVe1]{DvVe1} A. Dvure\v censkij, T. Vetterlein,   {\it
Pseudoeffect algebras. I. Basic properties,    } Inter. J. Theor.
Phys. {\bf 40} (2001), 685--701.

\bibitem[DvVe2]{DvVe2} A. Dvure\v censkij, T. Vetterlein,   {\it
Pseudoeffect algebras. II. Group representation,} Inter. J. Theor.
Phys. {\bf 40} (2001), 703--726.

\bibitem[FoBe]{FoBe}  D.J. Foulis, M.K. Bennett,
{\it  Effect algebras and unsharp quantum logics}, Found. Phys. {\bf
24} (1994), 1325--1346.

\bibitem[Fuc]{Fuc}
L. Fuchs, {\it Partially Ordered Algebraic Systems,} Pergamon Press,
Oxford, London, NY, Paris, 1963).

\bibitem[GeIo]{GeIo} G. Georgescu, A. Iorgulescu, {\rm Pseudo-MV algebras,}
{\it Multi. Val. Logic}  {\bf 6} (2001), 95--135.

\bibitem[Gla]{Gla}
A.M.W. Glass {\it ``Partially Ordered Groups"}, World Scientific,
Singapore, 1999.


\bibitem[Goo]{Goo}
 K.R. Goodearl,
{\it ``Partially Ordered Abelian Groups with Interpolation",}
 Math. Surveys and Monographs No. 20, Amer. Math. Soc.,
 Providence, Rhode Island, 1986.

\bibitem[Kol]{Kol} A.N. Kolmogorov,
 {\em  ``Grundbegriffe der Wahrscheinlichkeitsrechnung"},
 Julius Springer, Berlin,   1933.


\bibitem[Kro]{Kro}
T. Kroupa,   {\it Every state on semisimple MV-algebra is integral.}
Fuzzy Sets and Systems  {\bf 157}  (2006), 2771--2782.

\bibitem[Pan]{Pan} G. Panti, {\it Invariant measures in free
MV-algebras,} Comm. Algebra  {\bf 36} (2008), 2849--2861.


\bibitem[Phe]{Phe}  R.R. Phelps, {\it ``Lectures on Choquet's
Theorem",} Van Nostrand, Princeton, 1965.

\bibitem[Rav]{Rav} K. Ravindran,
{\it  On a structure theory of effect algebras}, PhD thesis, Kansas
State Univ., Manhattan, Kansas, 1996.


\end{thebibliography}
\end{document}